\documentclass[11pt]{amsart}
\usepackage{amsmath, color}
\usepackage{amsfonts}
\usepackage{latexsym}
\usepackage{ifpdf}
\usepackage{graphicx,amssymb,lineno}
\usepackage{amssymb}
\usepackage{mathrsfs}
\usepackage{cite}
\usepackage{extarrows}
\usepackage{enumerate}
\usepackage{ulem}
\usepackage{tikz}
\usepackage{float}
\usepackage{makecell}
\usepackage{scalefnt}
\usetikzlibrary{calc}
\allowdisplaybreaks[4]
\pagestyle{plain}
\pagestyle{plain} \oddsidemargin 0pt \evensidemargin 0pt
\textheight=225 mm \textwidth=170 mm

\voffset=-1.00 true cm

\numberwithin{equation}{section}

\newtheorem{theorem}{{\bf Theorem}}[section]

\newtheorem{corollary}[theorem]{{\bf Corollary}}
\newtheorem{remark}{{\bf Remark}}[section]

\newcommand{\Sp}{\mathrm{Sp}}
\newcommand{\SO}{\mathrm{SO}}

\date{}
\begin{document}
\title{\Large Universal symplectic/orthogonal functions and general branching rules}
\author{Zhihong Jin}
\address{School of Science, Huzhou University, Huzhou, Zhejiang 313000, China}
\email{317598173@qq.com}
\author{Naihuan Jing$^{\dagger}$}
\address{Department of Mathematics, North Carolina State University, Raleigh, NC 27695, USA}
\email{jing@ncsu.edu}
\author{Zhijun Li}
\address{School of Science, Huzhou University, Huzhou, Zhejiang 313000, China}
\email{zhijun1010@163.com}
\author{Danxia Wang}
\address{School of Science, Huzhou University, Huzhou, Zhejiang 313000, China}
\email{dxwangmath@126.com}
\thanks{{\scriptsize
\hskip -0.6 true cm MSC (2010): Primary: 05E05; Secondary: 17B37.
\newline Keywords: Universal symplectic/orthgonal functions, Vertex operator realizations, General branching rules, $CB$-interpolating Schur functions.
\newline $^\dag$ Corresponding author: jing@ncsu.edu
\newline Supported by Simons Foundation (grant no. MP-TSM-00002518), NSFC (grant nos. 12171303, 12101231) and NSF of Huzhou (grant no. 2022YZ47).
}}
\maketitle
\begin{abstract} In this paper, we first introduce a family of universal symplectic functions $sp_\lambda(\mathbf{x}^{\pm};\mathbf{z})$  that include
symplectic Schur functions $sp_\lambda(\mathbf{x}^{\pm})$, odd symplectic characters $sp_\lambda(\mathbf{x}^{\pm};z)$, universal symplectic characters $sp_\lambda(\mathbf{z})$ and  intermediate symplectic characters as subfamilies. We then realize the universal symplectic functions
by vertex operators, which naturally lead to their skew versions, and show that $sp_\lambda(\mathbf{x}^{\pm};\mathbf{z})$
obey the general branching rules. This also gives the Gelfand-Tsetlin representations of odd symplectic characters and a transition formula between odd symplectic characters and symplectic Schur functions.

Secondly we introduce a family of universal orthogonal functions $o_\lambda(\mathbf{x}^{\pm};\mathbf{z})$ and their skew versions in a similar manner, and we provide their vertex operator realizations and
obtain transition formulas and the branching rule. The universal orthogonal functions
$o_\lambda(\mathbf{x}^{\pm};\mathbf{z})$ generalize orthogonal Schur functions $o_\lambda(\mathbf{x}^{\pm})$, odd orthogonal Schur functions $so_\lambda(\mathbf{x}^{\pm})$, universal orthogonal characters $o_\lambda(\mathbf{z})$ as well as  intermediate orthogonal characters.

Thirdly, we give vertex operator realizations for the $CB$-interpolating Schur functions $s^{CB}_\lambda(x;\beta)$ introduced by Bisi and Zygouras (Adv. Math., 2022) and
the $DB$-interpolating Schur functions $s^{DB}_\lambda(x;\beta)$ interpolating between characters of type $D$ and $B$. As an application, we show $s^{CB}_\lambda(x;\beta)$ are equal to the orthosymplectic Schur polynomials $spo_\lambda(x/\beta)$, thus give a short proof of the generalization of the Brent-Krattenthaler-Warnaar identity obtained by Kumari (arXiv:2401.01723).
\end{abstract}

\section{Introduction}
The symplectic Schur functions and (odd/even) orthogonal Schur functions form one of the important chapters in the classic monograph of Weyl \cite{Wey1946} who introduced them as the characters of irreducible representations of the classical complex groups $\Sp_{2n}$ and $\SO_{n}$ 
respectively. Proctor\cite{Pr1988} introduced an odd symplectic group $\Sp_{2n+1}$ by considering a degenerate symplectic form on an odd-dimensional space
and  derived the characters of the indecomposable representations of $\Sp_{2n+1}$
as the first example of odd symplectic characters. Universal symplectic/orthogonal characters of $\Sp_{2n}$ and $\SO_{2n}$ in the ring of symmetric functions were defined by Koike and Terada\cite{KT1987} and many identities between them and Schur functions have been derived therein.
Proctor\cite{Pr1988} studied intermediate symplectic characters
as the characters of ``trace-free'' representations of intermediate symplectic group $\Sp_{2k,n-k}$. Krattenthaler\cite{Kr1995} gave determinantal expressions for intermediate symplectic characters and also defined intermediate orthogonal characters. These latter works have called for a similar treatment of the
{\it odd orthogonal Schur functions} as a family of symmetric functions like Schur functions.

In this paper, we would like to put all these symmetric functions in a broad umbrella and introduce universal symplectic functions $sp_\lambda(\mathbf{x}^{\pm};\mathbf{z})$ and universal orthogonal functions $o_\lambda(\mathbf{x}^{\pm};\mathbf{z})$ (invariants of $(S_n\ltimes (\mathbb{Z}_2)^n)\times S_m$) which generalize  aforementioned symplectic/orthogonal-type functions so that they can be better studied by a uniformed method.

In the previous work \cite{JLW2022}, three of us have used the vertex operator realizations to construct skew symplectic Schur functions and skew orthogonal Schur functions and derived their favorable properties such as the Jacobi-Trudi formulas, Cauchy-type identities and Gelfand-Tsetlin representations etc.
Actually, the vertex operator realizations also show that these skew-type functions satisfy the general branching rules \eqref{e:br1} and \eqref{e:br2}, which have
resolved the question whether the skew versions share the same property as the skew Schur functions.
We will extend the method of \cite{JLW2022} to give vertex operator realizations for (skew) universal symplectic/orthogonal functions. One will see that
the approach of the vertex operator realizations naturally provides a unified framework to get the general branching rules for $sp_\lambda(\mathbf{x}^{\pm};\mathbf{z})$ and $o_\lambda(\mathbf{x}^{\pm};\mathbf{z})$.

With the vertex operator realizations of symmetric functions, it is natural to see that
universal symplectic/orthogonal functions generalize various important families of symmetric functions
such as odd symplectic characters, odd orthogonal Schur functions, universal symplectic/orthogonal characters and intermediate symplectic/orthogonal characters etc.
A transition formula between odd symplectic characters and symplectic Schur functions is also provided. The following table displays  the  situation of
aforementioned symplectic/orthogonal-type functions.

\begin{tabular}{|c|c|c|}
\hline
\makecell{$\mathbf{x}^{\pm}=(x^{\pm}_1,\dots,x^{\pm}_n)$,\\$\mathbf{z}=(z_1,\dots,z_m)$,\\$\lambda=(\lambda_1,\dots,\lambda_{n+m})$} & \makecell{universal symplectic functions\\ $sp_\lambda(\mathbf{x}^{\pm};\mathbf{z})$} & \makecell{universal orthogonal functions\\ $o_\lambda(\mathbf{x}^{\pm};\mathbf{z})$}\\ \hline
$m=0$ & symplectic Schur functions $sp_\lambda(\mathbf{x}^{\pm})$ & orthogonal Schur functions $o_\lambda(\mathbf{x}^{\pm})$\\ \hline
$m=1$& \makecell{{\it odd symplectic characters}\\ $sp_\lambda(\mathbf{x}^{\pm};z)$} & \makecell{(under $l(\lambda)=n$ and $z=1$)\\odd orthogonal Schur functions\\$so_\lambda(\mathbf{x}^{\pm})$} \\ \hline
$n=0$ & universal symplectic chars. $sp_\lambda(\mathbf{z})$ & universal orthogonal chars. $o_\lambda(\mathbf{z})$\\ \hline
$\lambda=(\lambda_1,\dots,\lambda_{n})$ & intermediate symplectic characters & intermediate orthogonal characters\\ \hline
\end{tabular}

\medskip

In Macdonald's theory of Hall-Littlewood and Macdonald symmetric functions \cite{Mac1995}, the skew versions have played a pivotal role in deriving combinatorial formulas for classical families of symmetric functions. Likewise, we also derive Gelfand-Tsetlin presentations for skew odd symplectic characters. It is shown that despite $\Sp_{2n+1}$ is not counted among the classical groups, its characters are also given by generating functions of Gelfand-Tsetlin patterns.

In \cite{BZ2022}, Bisi and Zygouras introduced two families of generalized Schur functions $s^{CB}_\lambda(x;\beta)$ and $s^{DB}_\lambda(x;\alpha)$
that intercalate between characters of types $C$ and $B$ and types $D$ and $B$ respectively,
We provide vertex operator realization for the $CB$-interpolating Schur functions $s^{CB}_\lambda(x;\beta)$ and a new family of $DB$-interpolating Schur functions $s^{DB}_\lambda(x;\beta)$. For $x=(x_1,\dots,x_n)$ and $y=(y_1,\dots,y_m)$, Bisi and Zygouras derived an important identity to show how a $CB$-interpolating
Schur functions of rectangular
shape can be expressed as a bounded Cauchy sum for $CB$-interpolating
Schur functions of the same
type \cite[(1.10)]{BZ2022}
\begin{align}\label{e:CB6}
s^{CB}_{u^{m+n}}(x,y;\beta)=\sum_{\lambda_1\leq u}s^{CB}_{(u^{n-m},\lambda)}(x;\beta)s^{CB}_{\lambda}(y;\beta).
\end{align}
Using vertex operator realization for $s^{CB}_\lambda(x;\beta)$ ($=spo_\mu(x/\beta)$), we will show that \eqref{e:CB6} actually agrees with the Brent-Krattenthaler-Warnaar-type identity obtained by Kumari \cite[Thm 4.7]{Kum2024}
\begin{equation}
\label{BKW}
     \sum_{\lambda} \beta^u spo_{(u^{n-m},\lambda)}(x/\beta) spo_{\lambda}(y/\beta) \\
     =
    \sum_{j=1}^{m+n+1} \beta^{u+m+n+1-j} sp_{\big(u^{j-1},(u-1)^{m+n+1-j}\big)}(\mathbf{x}^{\pm},\mathbf{y}^{\pm}),
\end{equation}
where the orthosymplectic Schur polynomials $spo_\mu(x/\beta)$ are the characters of irreducible representations of
orthosymplectic Lie superalgebras $spo(2n,1)$. As a by-product, this provides a quick and easy proof of \eqref{BKW}.

This paper is organized as follows. In Sections 2, we give vertex operator realizations for universal symplectic functions and their skew-type, and obtain general branching rules for universal symplectic functions. We also show general branching rules and some combinatorial properties for the special cases.
 In Section 3, we give vertex operator realizations and obtain some properties for universal symplectic functions and the extremal cases. In Section 4, we provide the vertex operator realizations for $CB$-interpolating Schur functions $s^{CB}_\lambda(x;\beta)$ and $DB$-interpolating Schur functions $s^{DB}_\lambda(x;\beta)$ and then derive their transition formulas and the Jacobi-Trudi identities. We also give a new proof of the Brent-Krattenthaler-Warnaar-type identity recently obtained by Kumari.

\section{Universal symplectic functions}
\subsection{Symplectic vertex operators}In this section, we recall necessary definitions and facts
about symplectic vertex operators according to the conventions of \cite{JLW2022,JN2015}. Throughout the paper, we work over an algebraically
closed field $\mathbf{k}$ of characteristic 0 and write $[A,B]=AB-BA$.

A {\it general partition} $\lambda=(\lambda_1,\dots,\lambda_n)$ is a finite sequence of decreasing {\it nonnegative integers}, and the number of nonzero parts is called the length, denoted by $l(\lambda)$, and $|\lambda|=\lambda_1+\dots+\lambda_n$ is the weight of $\lambda$\footnote{Note that a partition is a special case of general partition.}. We find it necessary to distinguish between two such sequences which differ only by a
string of zeros at the end. 
A general partition $\mu$ is contained in another general general partition
$\lambda$ interlaces another general partition $\nu$, denoted as $\nu\prec\lambda$, if $\lambda_{i}\ge\nu_{i}\geq\lambda_{i+1}$ for all $i$.

Let $\mathcal{H}$ be the Heisenberg algebra generated by $\{a_n|n\neq 0\}$ with the central element $c=1$ subject to the commutation relations\cite{FK1980}
\begin{align}\label{e:he1}
[a_m,a_n]=m\delta_{m,-n}c,~~~~[a_n,c]=0.
\end{align}
The Fock space $\mathcal{M}$ (resp. $\mathcal{M}^*$) is generated by the vacuum vector $|0\rangle$ (resp. dual vacuum vector $\langle0|$) and
subject to
\begin{align*}
a_n|0\rangle=\langle0|a_{-n},~~n>0.
\end{align*}

Define a gradation of $\mathcal M$ by $deg(a_{n})=n$ ($n>0$), and let $\mathcal M^*_n$ be the degree $n$ subspace.
Let $\widetilde{\mathcal{M}^*}$ be the completion of $\mathcal{M}^*$. Here the topology is by the sequence of subspaces generated by $\mathcal M^*_n$.

Define the vertex operators\cite{JN2015,Ba1996}
\begin{align}\label{e:sp40}
&Y(z)=\exp\left(\sum^\infty_{n=1}\frac{a_{-n}}{n}z^n\right)\exp\left(-\sum^\infty_{n=1}\frac{a_n}{n}(z^{-n}+z^n)\right)=\sum_{n\in \mathbb{Z}}Y_nz^{-n},\\
&Y^*(z)=(1-z^2)\exp\left(-\sum^\infty_{n=1}\frac{a_{-n}}{n}z^n\right)\exp\left(\sum^\infty_{n=1}\frac{a_n}{n}(z^{-n}+z^n)\right)=\sum_{n\in \mathbb{Z}}Y^*_nz^n.
\end{align}
It is easy to check that\cite{Ba1996,JN2015,JLW2022}
\begin{equation}
\begin{aligned}\label{e:com1}
&Y_iY_j+Y_{j+1}Y_{i-1}=0,\qquad Y^*_iY^*_j+Y^*_{j-1}Y^*_{i+1}=0,\qquad Y_iY^*_j+Y^*_{j+1}Y_{i+1}=\delta_{i,j},\\
&Y_n|0\rangle=Y^*_{-n}|0\rangle=0, ~~\text{for}~ n>0,~~Y_0|0\rangle=Y^*_0|0\rangle=|0\rangle,\\
&\langle 0|Y^*_n=-\langle 0|Y^*_{-n+2},~~ \langle 0|Y_n=\langle 0|Y_{-n},
\end{aligned}
\end{equation}
where the key tool is the Baker-Campbell-Hausdorff (BCH) formula
\begin{align*}
e^Ae^B=e^Be^Ae^{[A, B]},~~\mbox{if $[A,B]$ commutes with $A$ and $B$.}
\end{align*}

For general partition $\lambda=(\lambda_1,\dots,\lambda_l)$, define the {\it partition elements}
\begin{align*}
|\lambda^{sp}\rangle=Y_{-\lambda_1}Y_{-\lambda_2}\cdots Y_{-\lambda_l}|0\rangle, \qquad\langle \lambda^{sp}|=\langle 0|Y^*_{-\lambda_l}\cdots Y^*_{-\lambda_1}.
\end{align*}
We remark that $\langle \lambda^{sp}|$ belongs to $\widetilde{\mathcal{M}^*}$ other than $\mathcal{M}^*$ and that $\langle 0|Y^*_0\neq \langle 0|$. According to the commutation relations \eqref{e:com1}, for any general partition $\mu=(\mu_1,\dots,\mu_l)$ and permutation $\sigma\in S_l$ we have that
\begin{align}\label{e:sp9}
&\varepsilon(\sigma)\langle 0|\left(^{Y^*_{-\mu_{\sigma(l)}+\sigma(l)-l}}_{-Y^*_{\mu_{\sigma(l)}-\sigma(l)+l+2}}\right)\cdots \left(^{Y^*_{-\mu_{\sigma(i)}+\sigma(i)-i}}_{-Y^*_{\mu_{\sigma(i)}-\sigma(i)+2(l+1)-i}}\right)\cdots \left(^{Y^*_{-\mu_{\sigma(1)}+\sigma(1)-1}}_{-Y^*_{\mu_{\sigma(1)}-\sigma(1)+2l+1}}\right)=\langle\mu^{sp}|,\\
\label{e:sp27}&\varepsilon(\sigma)Y_{-\mu_{\sigma(1)}+\sigma(1)-1}\dots Y_{\mu_{\sigma(l)}-\sigma(l)+l}|0\rangle=|\mu^{sp}\rangle,
\end{align}
where the notation $(^a_b)$ means either $a$ or $b$. More generally, for any integer sequence $\mathbf{n}=(n_1,\dots,n_l)$,
$\langle \mathbf{n}^{sp}|=\langle 0|Y^*_{-n_l}\cdots Y^*_{-n_1}$ equals to a partition element up to sign or $0$.

 For general partitions $\mu=(\mu_1,\mu_2,\dots,\mu_l)$ and $\lambda=(\lambda_1,\lambda_2,\dots,\lambda_l)$ with possible zero parts in the tails, it follows from \cite[Theorem 2.1]{JLW2022} that\footnote{For completeness, a proof is provided in the Appendix.}
\begin{align}
\label{e:sp14}\langle \mu^{sp}|\lambda^{sp}\rangle=\delta_{\lambda\mu}.
\end{align}
By \eqref{e:sp14}, for general partition $\alpha=(\alpha_1,\dots,\alpha_n)$, we have
\begin{align}\label{e:sp15}
\langle \alpha^{sp}|\sum_{\eta=(\eta_1,\dots,\eta_n)}|\eta^{sp}\rangle\langle\eta^{sp}|=\langle \alpha^{sp}|,~~\sum_{\eta=(\eta_1,\dots,\eta_n)}|\eta^{sp}\rangle\langle\eta^{sp}||\alpha^{sp}\rangle=|\alpha^{sp}\rangle,
\end{align}
where the sum is over all general partitions $\eta=(\eta_1,\dots,\eta_n)$. Thus $\sum_{\eta=(\eta_1,\dots,\eta_n)}|\eta^{sp}\rangle\langle\eta^{sp}|$ can be viewed as an identity operator in the subspace spanned by $|\alpha^{sp}\rangle$ (or $\langle\alpha^{sp}|$). 

Introduce the following half vertex operators
\begin{align*}
 \Gamma_+(w)=\exp\left(\sum^\infty_{n=1}\frac{a_n}{n}w^n\right),~~ \Gamma_-(w)=\exp\left(\sum^\infty_{n=1}\frac{a_{-n}}{n}w^n\right).
\end{align*}
For $\mathbf{x}^{\pm}=(x^{\pm}_1,\dots,x_n^{\pm})$, $\mathbf{z}=(z_1,\dots,z_m)$, define
\begin{align}\label{e:gamma1}
&\Gamma_+(\mathbf{x}^{\pm})=\prod^n_{i=1}\Gamma_+(x_i)\Gamma_+(x^{-1}_i),\\ \label{e:gamma2}
&\Gamma_+(\mathbf{x}^{\pm};\mathbf{z})=\prod^m_{j=1}\Gamma_+(z_j)\prod^n_{i=1}\Gamma_+(x_i)\Gamma_+(x^{-1}_i).
\end{align}

We introduce the generalized homogeneous and elementary symmetric functions by the generating functions
\begin{align*}
&\prod^n_{i=1}\frac{1}{(1-x_iw)(1-x^{-1}_iw)}\prod^m_{j=1}\frac{1}{(1-z_jw)}=\sum_{i\in \mathbb{Z}}h_i(\mathbf{x}^{\pm};\mathbf{z})w^i,\\
&\prod^n_{i=1}\frac{1}{(1-x_iw)(1-x^{-1}_iw)}\prod^m_{j=1}\frac{1}{(1-z_jw)(1-z^{-1}_jw)}=\sum_{i\in \mathbb{Z}}h_i(\mathbf{x}^{\pm};\mathbf{z}^{\pm})w^i,\\
&\prod^m_{i=1}(1-z^{-1}_iw)=\sum^m_{i=0}e_i(-\mathbf{z}^{-1})w^i=\sum^m_{i=0}e_iw^i.
\end{align*}
It is easy to see that $h_i(\mathbf{x}^{\pm};\mathbf{z})=0$ for $i<0$ and $h_0(\mathbf{x}^{\pm};\mathbf{z})=1$. One then has the generalized Newton identity:
\begin{align}\label{e:sp8}
&h_n(\mathbf{x}^{\pm};\mathbf{z})=\sum^m_{i=0}e_ih_{n-i}(\mathbf{x}^{\pm};\mathbf{z}^{\pm}),\\
\label{e:sp50}&h_n(\mathbf{x}^{\pm};\mathbf{z})=\sum^\infty_{i=0}h_i(\mathbf{z})h_{n-i}(\mathbf{x}^{\pm})
\end{align}
with
\begin{align*}
\prod^n_{i=1}\frac{1}{(1-x_iw)(1-x^{-1}_iw)}=\sum_{i\in \mathbb{Z}}h_i(\mathbf{x}^{\pm})w^i, ~~~~~~~~\prod^m_{j=1}\frac{1}{(1-z_jw)}=\sum_{i\in \mathbb{Z}}h_i(\mathbf{z})w^i.
\end{align*}
%
\subsection{Vertex operator realization of (skew) symplectic Schur functions}We only recall some basic results of (skew) symplectic Schur functions from \cite{JLW2022}.
\begin{theorem}\label{th6} \rm{\cite[Prop. 3.2]{JLW2022}} For any general partition $\nu=(\nu_1,\dots,\nu_{n})$, one has
\begin{align}\label{e:sp26}
\langle0|\Gamma_+(\mathbf{x}^{\pm})|\nu^{sp}\rangle~=~sp_\nu(\mathbf{x}^{\pm})~=~&\frac{\det(x^{\lambda_j+(n-j+1)}_i-x^{-\lambda_j-(n-j+1)}_i)^n_{i,j=1}}{\det(x^{n-j+1}_i-x^{-(n-j+1)}_i)^n_{i,j=1}}\\
~=~&\det\big{(}h_{\lambda_i-i+j}(\mathbf{x}^{\pm})+\delta_{j>1}h_{\lambda_i-i-j+2}(\mathbf{x}^{\pm})\big{)}^{n}_{ i,j=1},
\end{align}
where $sp_\nu(\mathbf{x}^{\pm})$ is the symplectic Schur function associated to $\nu$ \cite{FK1997,Wey1946}.
\end{theorem}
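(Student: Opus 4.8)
The plan is to compute the matrix coefficient directly by commuting the half vertex operator $\Gamma_+(\mathbf{x}^{\pm})$ to the right until it reaches the vacuum, on which it acts trivially. First I would record the basic contraction between $\Gamma_+$ and the generating series $Y(z)$ in \eqref{e:sp40}. Writing $Y(z)=A(z)B(z)$ with $A(z)=\exp\big(\sum_{n\ge1}\tfrac{a_{-n}}{n}z^n\big)$ the creation part and $B(z)$ the annihilation part, the BCH formula gives $[\log\Gamma_+(w),\log A(z)]=-\log(1-wz)$ while $\Gamma_+(w)$ commutes with $B(z)$ (both involve only positive modes); hence $\Gamma_+(w)Y(z)=(1-wz)^{-1}Y(z)\Gamma_+(w)$. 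Taking the product over the $2n$ arguments $x_i^{\pm1}$ then yields
\begin{align*}
\Gamma_+(\mathbf{x}^{\pm})\,Y(z)=\Big(\prod_{i=1}^n\frac{1}{(1-x_iz)(1-x_i^{-1}z)}\Big)Y(z)\,\Gamma_+(\mathbf{x}^{\pm})=\Big(\sum_{k\ge0}h_k(\mathbf{x}^{\pm})z^k\Big)Y(z)\,\Gamma_+(\mathbf{x}^{\pm}).
\end{align*}

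Next I would iterate this relation across the partition element $|\nu^{sp}\rangle=Y_{-\nu_1}\cdots Y_{-\nu_n}|0\rangle$, using $\Gamma_+(\mathbf{x}^{\pm})|0\rangle=|0\rangle$. Extracting modes gives
\begin{align*}
\langle0|\Gamma_+(\mathbf{x}^{\pm})|\nu^{sp}\rangle=\sum_{k_1,\dots,k_n\ge0}\prod_{i=1}^n h_{k_i}(\mathbf{x}^{\pm})\,\langle0|Y_{-\nu_1+k_1}\cdots Y_{-\nu_n+k_n}|0\rangle,
\end{align*}
so everything reduces to the vacuum expectation $\langle0|Y_{b_1}\cdots Y_{b_n}|0\rangle$. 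This I would evaluate by Wick's theorem: normal ordering the $Y(z_i)$ using the contraction $Y(z)Y(w)\sim(1-zw)(1-z^{-1}w)$ (again via BCH) together with $\langle0|A(z)=\langle0|$ and $B(z)|0\rangle=|0\rangle$ produces $\langle0|Y(z_1)\cdots Y(z_n)|0\rangle=\prod_{1\le i<j\le n}(1-z_iz_j)(1-z_i^{-1}z_j)$, i.e.\ $\langle0|Y_{b_1}\cdots Y_{b_n}|0\rangle$ is the coefficient of $z_1^{-b_1}\cdots z_n^{-b_n}$ in this finite Laurent polynomial.

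It then remains to perform the coefficient extraction against $\prod_i H(z_i)$, where $H(z)=\sum_{k\ge0}h_k(\mathbf{x}^{\pm})z^k$. Antisymmetrizing and clearing the symplectic Weyl denominator $\det(x_i^{n-j+1}-x_i^{-(n-j+1)})$ identifies the result with the bialternant, i.e.\ with $sp_\nu(\mathbf{x}^{\pm})$; equivalently, expanding the factors $(1-z_i^{-1}z_j)$ and collecting the surviving signed terms matches the Leibniz expansion of $\det\big(h_{\nu_i-i+j}(\mathbf{x}^{\pm})+\delta_{j>1}h_{\nu_i-i-j+2}(\mathbf{x}^{\pm})\big)$. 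In mode language this is the assertion that $\langle0|Y_{b_1}\cdots Y_{b_n}|0\rangle$ straightens, via the anticommutation relations $Y_iY_j+Y_{j+1}Y_{i-1}=0$ and the folding relation $\langle0|Y_n=\langle0|Y_{-n}$ recorded in \eqref{e:com1}, to a signed sum of entries of that matrix.

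The main obstacle is precisely this last bookkeeping. In the $\GL$ (type $A$) situation the analogous vacuum expectation is supported, up to the symmetric group, on a single staircase, so the extraction collapses to one determinant at once; here the extra factors $(1-z_i^{-1}z_j)$ — equivalently the folding $\langle0|Y_n=\langle0|Y_{-n}$, which has no type-$A$ counterpart — are what contribute the reflected homogeneous terms $\delta_{j>1}h_{\nu_i-i-j+2}(\mathbf{x}^{\pm})$, and these must be tracked with the correct signs. A clean way to organize the argument is to derive only one of the two determinant formulas from the vertex operator computation and to invoke the classical symplectic identity relating the bialternant and the Jacobi--Trudi determinant for the other, thereby isolating the genuinely new content in a single straightening lemma.
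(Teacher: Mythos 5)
The first thing to note is that the paper contains no internal proof of Theorem \ref{th6}: it is imported verbatim from \cite[Proposition 3.2]{JLW2022}, so the comparison is with that vertex-operator proof (which in turn goes back to \cite{Ba1996,JN2015}). Your route is essentially that same proof, and every step you actually display is correct: the commutation $\Gamma_+(w)Y(z)=(1-wz)^{-1}Y(z)\Gamma_+(w)$ (the annihilation part of $Y(z)$ indeed commutes with $\Gamma_+$), the resulting expansion $\langle0|\Gamma_+(\mathbf{x}^{\pm})|\nu^{sp}\rangle=\sum_{k_1,\dots,k_n\ge0}\prod_ih_{k_i}(\mathbf{x}^{\pm})\langle0|Y_{-\nu_1+k_1}\cdots Y_{-\nu_n+k_n}|0\rangle$, and the vacuum expectation $\langle0|Y(z_1)\cdots Y(z_n)|0\rangle=\prod_{i<j}(1-z_iz_j)(1-z_i^{-1}z_j)$ are all right.

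There are, however, two repairs needed. First, the step you defer as ``the main obstacle'' is in fact the entire content of the theorem, and it deserves to be stated as a concrete lemma rather than a plan; fortunately it is a closed-form determinant identity, not a delicate mode-straightening:
\begin{align*}
\prod_{1\le i<j\le n}\Bigl(1-\frac{z_j}{z_i}\Bigr)(1-z_iz_j)=\det\Bigl(z_i^{\,i-j}+\delta_{j>1}\,z_i^{\,i+j-2}\Bigr)_{i,j=1}^{n},
\end{align*}
provable by induction with column operations, or extractable from the type-$C$ Weyl denominator formula $\det\bigl(z_i^{j-1}-z_i^{2n+1-j}\bigr)_{i,j=1}^n=\prod_{i}(1-z_i^2)\prod_{i<j}(z_j-z_i)(1-z_iz_j)$. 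Once this is in hand, no antisymmetrization is needed at all: since row $i$ of the determinant involves only $z_i$, taking the coefficient of $z_1^{\nu_1}\cdots z_n^{\nu_n}$ in $\prod_iH(z_i)$ times the determinant is done row by row, and the coefficient of $z_i^{\nu_i}$ in $H(z_i)\bigl(z_i^{i-j}+\delta_{j>1}z_i^{i+j-2}\bigr)$ is exactly $h_{\nu_i-i+j}(\mathbf{x}^{\pm})+\delta_{j>1}h_{\nu_i-i-j+2}(\mathbf{x}^{\pm})$, giving the Jacobi--Trudi form of \eqref{e:sp26} directly; the bialternant then follows from the classical equivalence \cite{Wey1946,FK1997}, as your fallback correctly proposes. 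Second, a misattribution in your closing discussion that would derail the bookkeeping if followed literally: in your normalization the factors $(1-z_i^{-1}z_j)=(1-z_j/z_i)$ are precisely the type-$A$ Vandermonde factors, already present in the $\GL$ case, and they produce the leading entries $h_{\nu_i-i+j}$; the genuinely new type-$C$ factors are $(1-z_iz_j)$, which arise from the $z^{+n}$ half of the annihilation exponent of $Y(z)$ --- the same half responsible for the folding $\langle0|Y_n=\langle0|Y_{-n}$ in \eqref{e:com1} --- and it is these that contribute the reflected entries $\delta_{j>1}h_{\nu_i-i-j+2}$. With the lemma supplied and the attribution corrected, your proposal is a complete proof by the same method as the cited source.
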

\begin{theorem}\label{th7}\rm{\cite[Prop. 3.2]{JLW2022}}
For general partitions $\beta=(\beta_1,\dots,\beta_l)$ and $\alpha=(\alpha_1,\dots,\alpha_{l+n})$, one has
\begin{align}\label{e:sp1}
\langle\beta^{sp}|\Gamma_+(\mathbf{x}^{\pm})|\alpha^{sp}\rangle~=~&\det\big{(}h_{\alpha_i-\beta_j-i+j}(\mathbf{x}^{\pm})+\delta_{j>l+1}h_{\alpha_i-i-j+2l+2}(\mathbf{x}^{\pm})\big{)}^{l+n}_{ i,j=1}\\
~=~&sp_{\alpha/\beta}(\mathbf{x}^{\pm}),
\end{align}
which is nonzero unless $\beta\subset\alpha$ and $sp_{\alpha/\beta}(\mathbf{x}^{\pm})$ is the skew symplectic Schur function attached to $\alpha/\beta$ \cite{JLW2022,AFHS2023}.
\end{theorem}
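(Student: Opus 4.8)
The plan is to transport $\Gamma_+(\mathbf{x}^{\pm})$ to the right across the string of creation operators defining $|\alpha^{sp}\rangle$, thereby reducing the matrix element to a signed sum of elementary pairings $\langle\beta^{sp}|Y_{n_1}\cdots Y_{n_{l+n}}|0\rangle$ weighted by products of the generalized complete functions. First I would record the single-operator commutation produced by the BCH formula: since $\Gamma_+(w)$ and the annihilation part of $Y(z)$ involve only positive modes and hence commute, a direct contraction gives $\Gamma_+(\mathbf{x}^{\pm})Y(z)=\big(\prod_{i=1}^n\frac{1}{(1-x_iz)(1-x^{-1}_iz)}\big)Y(z)\Gamma_+(\mathbf{x}^{\pm})$, whose scalar prefactor is exactly the generating function $\sum_{k\ge0}h_k(\mathbf{x}^{\pm})z^k$ (the $m=0$ case defining $h_k(\mathbf{x}^{\pm})$), where $\Gamma_+(\mathbf{x}^{\pm})$ is as in \eqref{e:gamma1}. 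Comparing coefficients yields the mode identity $\Gamma_+(\mathbf{x}^{\pm})Y_n=\sum_{k\ge0}h_k(\mathbf{x}^{\pm})Y_{n+k}\Gamma_+(\mathbf{x}^{\pm})$. Iterating this across $Y_{-\alpha_1}\cdots Y_{-\alpha_{l+n}}$ and using $\Gamma_+(\mathbf{x}^{\pm})|0\rangle=|0\rangle$ gives
\begin{align*}
\langle\beta^{sp}|\Gamma_+(\mathbf{x}^{\pm})|\alpha^{sp}\rangle=\sum_{k_1,\dots,k_{l+n}\ge0}\Big(\prod_{i=1}^{l+n}h_{k_i}(\mathbf{x}^{\pm})\Big)\,\langle\beta^{sp}|Y_{-\alpha_1+k_1}\cdots Y_{-\alpha_{l+n}+k_{l+n}}|0\rangle,
\end{align*}
so everything is reduced to evaluating the pure pairings.

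The heart of the argument is then to show that $\langle\beta^{sp}|Y_{n_1}\cdots Y_{n_{l+n}}|0\rangle$ is a signed Kronecker delta selecting one straightened partition. Here I would exploit the first relation in \eqref{e:com1}, $Y_iY_j+Y_{j+1}Y_{i-1}=0$, which swaps two adjacent modes at the cost of a sign while leaving the multiset $\{n_i+i\}$ invariant; this is the type-$A$ ``fermionic'' straightening which, together with the orthonormality $\langle\mu^{sp}|\lambda^{sp}\rangle=\delta_{\lambda\mu}$ of \eqref{e:sp14} and the normal-ordering \eqref{e:sp27}, produces the leading Jacobi--Trudi determinant $\det(h_{\alpha_i-\beta_j-i+j}(\mathbf{x}^{\pm}))$: organizing the sum over the $k_i$ by the permutation $\sigma$ matching $\{-\alpha_i+k_i+i\}$ to the shifted parts $-\beta_j+j$ of $\beta$ forces $k_i=\alpha_i-\beta_{\sigma(i)}-i+\sigma(i)$ and supplies the sign $\varepsilon(\sigma)$. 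The genuinely symplectic feature is that a mode may be reflected through the wall: the boundary relations $\langle0|Y_n=\langle0|Y_{-n}$ and $\langle0|Y^*_n=-\langle0|Y^*_{-n+2}$ of \eqref{e:com1}, already encoded in the two-line symbol of \eqref{e:sp9}, let an index $n_i+i\le0$ fold back with a sign. Tracking these reflected contributions is what produces the second summand $\delta_{j>l+1}\,h_{\alpha_i-i-j+2l+2}(\mathbf{x}^{\pm})$, which by construction lives only in the columns $j>l+1$ where $\beta_j=0$ and the pure type-$C$ denominator regime takes over.

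Assembling the signed permutation-and-reflection sum into a single $(l+n)\times(l+n)$ determinant then gives the stated formula, and setting $l=0$ recovers Theorem \ref{th6} as a consistency check. For the identification with the skew symplectic Schur function $sp_{\alpha/\beta}(\mathbf{x}^{\pm})$ and the vanishing outside $\beta\subset\alpha$, I would argue separately: inserting the resolution of the identity \eqref{e:sp15} into $\langle0|\Gamma_+(\mathbf{y}^{\pm})\Gamma_+(\mathbf{x}^{\pm})|\alpha^{sp}\rangle$ and applying Theorem \ref{th6} to each factor shows that these matrix elements are precisely the coefficients in $sp_\alpha(\mathbf{x}^{\pm},\mathbf{y}^{\pm})=\sum_\beta sp_\beta(\mathbf{y}^{\pm})\,\langle\beta^{sp}|\Gamma_+(\mathbf{x}^{\pm})|\alpha^{sp}\rangle$, i.e. the branching coefficients defining $sp_{\alpha/\beta}$; the vanishing for $\beta\not\subset\alpha$ then also follows from the determinant, since a negative argument $\alpha_i-\beta_j$ collapses the leading diagonal block. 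I expect the main obstacle to be exactly the bookkeeping of the reflected modes: one must verify that each wall-reflection contributes the shifted index $\alpha_i-i-j+2l+2$ with the correct sign, that reflections never occur in the columns $j\le l+1$ carrying the skewing data $\beta$, and that no double-counting arises between the ``sort'' and the ``reflection'' channels. This is the step where the symplectic determinant genuinely departs from the classical $\mathfrak{gl}$ computation.
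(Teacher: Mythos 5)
Correct, and essentially the intended route: the paper does not reprove this theorem but imports it from \cite[Proposition 3.2]{JLW2022}, and the argument there is exactly your two-step computation --- push $\Gamma_+(\mathbf{x}^{\pm})$ through the $Y$-modes via $\Gamma_+(\mathbf{x}^{\pm})Y_n=\sum_{k\ge 0}h_k(\mathbf{x}^{\pm})Y_{n+k}\Gamma_+(\mathbf{x}^{\pm})$, evaluate the resulting pairings $\langle\beta^{sp}|Y_{-\alpha_1+k_1}\cdots Y_{-\alpha_{l+n}+k_{l+n}}|0\rangle$ by fermionic straightening and wall reflection together with the orthonormality \eqref{e:sp14}, and reassemble the signed sort-plus-reflection sum as the $(l+n)\times(l+n)$ determinant (the statement's ``nonzero unless'' is a typo for ``zero unless'', which your determinant argument handles correctly). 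The reflection bookkeeping you flag as the remaining crux (shift $2l+2$, signs, reflections confined to columns $j>l+1$, no double counting) is precisely the content of the two-line straightening identities such as \eqref{e:sp27} and its dual version that Section 2.1 records for this purpose, so your sketch closes once those are invoked.
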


Let $\alpha_\sigma=(\alpha_{\sigma(1)}-\sigma(1)+1,\dots,\alpha_{\sigma(l+n)}-\sigma(l+n)+l+n)\in \mathbb{Z}^{l+n}$ for $\sigma\in S_{l+n}$
(i.e. the dot action of the symmetric group). Using
determinantal properties and \eqref{e:sp27}, one can easily check the following permutation property for the skew symplectic Schur functions \cite{JLW2022}
\begin{align}\label{e:sp28}
\langle\beta^{sp}|\Gamma_+(\{\mathbf{x}^{\pm}\})|\alpha_\sigma^{sp}\rangle=sp_{\alpha_\sigma/\beta}(\mathbf{x}^{\pm})=\epsilon(\sigma)sp_{\alpha/\beta}(\mathbf{x}^{\pm}).
\end{align}

In \cite{JLW2022}, three of us have obtained the Gelfand-Tsetlin representation for skew symplectic Schur functions via the vertex operator method
and we have showed therein that the skew version obeys the branching rule
\begin{align*}
sp_\nu(x^\pm_1,\dots,x^\pm_n)=\sum_{\gamma\prec\eta\prec\nu}sp_\gamma(x^\pm_1,\dots,x^\pm_{n-1})x^{2|\eta|-|\gamma|-|\nu|}_n.
\end{align*}
and more generally they satisfy the branching rule:
\begin{align}\label{e:br1}
sp_\nu(x^\pm_1,\dots,x^\pm_n)=\sum_{\gamma=(\gamma_1,\dots,\gamma_{n-k})\subset\nu}sp_\gamma(x^\pm_1,\dots,x^\pm_{n-k})sp_{\nu/\gamma}(x^\pm_{n-k+1},\dots,x^\pm_n).
\end{align}
\subsection{Vertex operator realization of (skew) universal symplectic functions}In \cite{Ok2021}, Okada defined universal symplectic functions (invariants of $(S_n\ltimes (\mathbb{Z}_2)^n)\times S_m$) for partition $\lambda$ with $l(\lambda)\leq n$.
 \begin{align}\label{e:uni}
 sp_\lambda(\mathbf{x}^{\pm};\mathbf{z})=\det\big{(}h_{\lambda_i-i+j}(\mathbf{x}^{\pm};\mathbf{z})+\delta_{j>1}h_{\lambda_i-i-j+2}(\mathbf{x}^{\pm};\mathbf{z})\big{)}^{n+m}_{ i,j=1},
 \end{align}
 where $\delta_{j>1}$ equals 0 expect for $j>1$ it is 1.

 We now generalize Okada's definition and consider the universal symplectic functions $sp_\lambda(\mathbf{x}^{\pm};\mathbf{z})$ for partitions $\lambda$
 with $l(\lambda)\leq n+m$, where $\mathbf{x}$ and $\mathbf{z}$ are $n$-dimensional and $m$-dimensional vectors respectively.
\begin{theorem}\label{th1}
For any general partition $\lambda=(\lambda_1,\dots,\lambda_{n+m})$, one has that
\begin{align}\label{e:sp1}
\langle0|\Gamma_+(\mathbf{x}^{\pm};\mathbf{z})|\lambda^{sp}\rangle=sp_\lambda(\mathbf{x}^{\pm};\mathbf{z})=\det\big{(}h_{\lambda_i-i+j}(\mathbf{x}^{\pm};\mathbf{z})+\delta_{j>1}h_{\lambda_i-i-j+2}(\mathbf{x}^{\pm};\mathbf{z})\big{)}^{n+m}_{ i,j=1}.
\end{align}
\end{theorem}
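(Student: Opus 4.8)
The plan is to mirror the vertex-operator computation behind Theorem \ref{th6}, the only genuinely new ingredient being that the half vertex operator now carries the extra variables $\mathbf{z}$. Write $l=n+m$ and recall $|\lambda^{sp}\rangle=Y_{-\lambda_1}\cdots Y_{-\lambda_l}|0\rangle$. First I would record how $\Gamma_+(\mathbf{x}^{\pm};\mathbf{z})$ commutes past the current $Y(w)=\sum_p Y_pw^{-p}$. A direct BCH computation using \eqref{e:he1} gives $\Gamma_+(u)Y(w)=(1-uw)^{-1}Y(w)\Gamma_+(u)$ for a single factor $\Gamma_+(u)$, since $\Gamma_+(u)$ commutes with the annihilation part of $Y(w)$ and contracts with its creation part. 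Multiplying over $u\in\{z_j\}\cup\{x_i^{\pm1}\}$ yields
\begin{align*}
\Gamma_+(\mathbf{x}^{\pm};\mathbf{z})\,Y(w)=H(w)\,Y(w)\,\Gamma_+(\mathbf{x}^{\pm};\mathbf{z}),\qquad H(w)=\sum_{i\ge0}h_i(\mathbf{x}^{\pm};\mathbf{z})\,w^i,
\end{align*}
where $H(w)$ is precisely the generating function defining $h_i(\mathbf{x}^{\pm};\mathbf{z})$. Comparing coefficients of $w^{-q}$ turns this into the mode statement $\Gamma_+(\mathbf{x}^{\pm};\mathbf{z})Y_q=\big(\sum_{i\ge0}h_i(\mathbf{x}^{\pm};\mathbf{z})Y_{q+i}\big)\Gamma_+(\mathbf{x}^{\pm};\mathbf{z})$.

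Next I would push $\Gamma_+(\mathbf{x}^{\pm};\mathbf{z})$ all the way to the right through $Y_{-\lambda_1}\cdots Y_{-\lambda_l}$ and use $\Gamma_+(\mathbf{x}^{\pm};\mathbf{z})|0\rangle=|0\rangle$ to obtain
\begin{align*}
sp_\lambda(\mathbf{x}^{\pm};\mathbf{z})=\langle0|\Gamma_+(\mathbf{x}^{\pm};\mathbf{z})|\lambda^{sp}\rangle=\langle0|\,\widetilde Y_{-\lambda_1}\cdots\widetilde Y_{-\lambda_l}\,|0\rangle,\qquad \widetilde Y_{q}=\sum_{i\ge0}h_i(\mathbf{x}^{\pm};\mathbf{z})\,Y_{q+i}.
\end{align*}
The remaining task is to evaluate this vacuum matrix coefficient. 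Here I would invoke exactly the symplectic straightening used for Theorem \ref{th6}: the relations \eqref{e:com1}, in particular $Y_iY_j=-Y_{j+1}Y_{i-1}$ together with the boundary symmetry $\langle0|Y_n=\langle0|Y_{-n}$, allow one to reorder $\langle0|Y_{k_1}\cdots Y_{k_l}|0\rangle$ into a signed sum over permutations, i.e. a determinant. The reflection relation $\langle0|Y_n=\langle0|Y_{-n}$ is precisely what produces the second, folded term $\delta_{j>1}h_{\lambda_i-i-j+2}(\mathbf{x}^{\pm};\mathbf{z})$ in each matrix entry, which distinguishes the type $C$ Jacobi--Trudi form from the ordinary one.

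Since the $\mathbf{z}$-variables enter only through the scalar series $H(w)$ and never touch the operators $Y_p$ or the vacuum relations \eqref{e:com1}, this straightening is word-for-word the one carried out in \cite{JLW2022} for the $m=0$ case of Theorem \ref{th6}, with $h_i(\mathbf{x}^{\pm})$ replaced throughout by $h_i(\mathbf{x}^{\pm};\mathbf{z})$; formulas \eqref{e:sp9} and \eqref{e:sp27} organize the bookkeeping. One therefore arrives at
\begin{align*}
\langle0|\Gamma_+(\mathbf{x}^{\pm};\mathbf{z})|\lambda^{sp}\rangle=\det\big(h_{\lambda_i-i+j}(\mathbf{x}^{\pm};\mathbf{z})+\delta_{j>1}h_{\lambda_i-i-j+2}(\mathbf{x}^{\pm};\mathbf{z})\big)^{n+m}_{i,j=1},
\end{align*}
which is the asserted identity. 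I expect the only real obstacle to be the combinatorial bookkeeping in this last step, namely checking that after the reorderings the surviving terms assemble into the full $(n+m)\times(n+m)$ determinant and that the folded contributions carry the correct shift $\lambda_i-i-j+2$ and cut-off $\delta_{j>1}$; the preceding steps (the BCH commutation and the reduction to a vacuum expectation) are routine. As an alternative one could localize the $\mathbf{z}$-dependence by inserting the resolution of the identity \eqref{e:sp15} between $\prod_j\Gamma_+(z_j)$ and $\Gamma_+(\mathbf{x}^{\pm})$ and applying Theorem \ref{th7}, but the computation above keeps the argument self-contained and makes transparent that the $\mathbf{z}$-variables behave as ordinary (unpaired) Schur variables adjoined to the symplectic alphabet.
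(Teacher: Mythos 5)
Your proof is correct, but it takes a genuinely different route from the paper's. You commute the full operator $\Gamma_+(\mathbf{x}^{\pm};\mathbf{z})$ past the modes in one step, reduce to the vacuum expectation of the dressed modes $\widetilde Y_q=\sum_{i\geq 0}h_i(\mathbf{x}^{\pm};\mathbf{z})Y_{q+i}$, and then observe that the type-$C$ straightening is \emph{formal in the alphabet}: all variable dependence is channeled through $H(w)$ with $h_{i}=0$ for $i<0$ and $h_0=1$, so the $m=0$ argument behind Theorem~\ref{th6} applies verbatim with $h_i(\mathbf{x}^{\pm})$ replaced by $h_i(\mathbf{x}^{\pm};\mathbf{z})$. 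The paper never re-runs that straightening; instead it factors $\Gamma_+(\mathbf{x}^{\pm};\mathbf{z})=\Gamma_+(\mathbf{x}^{\pm};\mathbf{z}^{\pm})\exp(\mathbf{z}^{-1})$, commutes only the compensating factor $\exp(\mathbf{z}^{-1})$ through $|\lambda^{sp}\rangle$ (producing coefficients $e_s=e_s(-\mathbf{z}^{-1})$), inserts the completeness relation \eqref{e:sp15}, evaluates the resulting matrix coefficients by the already-established Theorem~\ref{th6} together with \eqref{e:sp28} for the enlarged \emph{paired} alphabet $(\mathbf{x}^{\pm},\mathbf{z}^{\pm})$, and reassembles the determinant entries via the generalized Newton identity \eqref{e:sp8}. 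Your route is more self-contained and in effect proves the Jacobi--Trudi identity for an arbitrary alphabet in one stroke---the truly ``universal'' statement---but it delegates the crux (the extraction of the determinant, with the folded term $\delta_{j>1}h_{\lambda_i-i-j+2}$ coming from $\langle 0|Y_n=\langle 0|Y_{-n}$) wholesale to the cited $m=0$ proof rather than verifying it; the paper's route proves nothing twice, and its intermediate expansion of $sp_\lambda(\mathbf{x}^{\pm};\mathbf{z})$ as an $e$-weighted sum of symplectic Schur functions in $(\mathbf{x}^{\pm},\mathbf{z}^{\pm})$ is exactly the technique reused at $m=1$ to derive the odd symplectic character formula and the transition formula (Theorems~\ref{th3} and~\ref{th10}), a by-product your argument does not yield. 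One caution about your closing aside: inserting \eqref{e:sp15} between $\prod_j\Gamma_+(z_j)$ and $\Gamma_+(\mathbf{x}^{\pm})$ and citing Theorem~\ref{th7} would \emph{not} close the argument, since the leftover factor $\langle 0|\prod_j\Gamma_+(z_j)|\mu^{sp}\rangle=sp_\mu(\mathbf{z})$ is the $n=0$ instance of the very theorem being proved; the paper's insertion avoids this circularity because both factors it creates are covered by Theorem~\ref{th6} and \eqref{e:sp28}.
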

\begin{proof} Writing
$\exp(\mathbf{z}^{-1})=\exp\left(-\sum^\infty_{k=1}\frac{a_k}{k}(z_1^{-k}+\cdots+z^{-k}_m)\right)$, and recalling the vertex operator $Y(w)$ \eqref{e:sp40}, we compute that
\begin{align*}
\exp(\mathbf{z}^{-1})Y(w)=\prod^m_{i=1}(1-\frac{w}{z_i})Y(w)\exp(\mathbf{z}^{-1}).
\end{align*}
It then follows by taking coefficients that
\begin{align*}
\exp(\mathbf{z}^{-1})Y_i=\sum^m_{s=0}e_sY_{i+s}.
\end{align*}
Subsequently 
\begin{align}
\exp(\mathbf{z}^{-1})|\lambda^{sp}\rangle=(\sum^m_{s_1=0}e_{s_1}Y_{-\lambda_1+s_1})\cdots (\sum^m_{s_{n+m}=0}e_{s_{n+m}}Y_{-\lambda_{n+m}+s_{n+m}})|0\rangle.
\end{align}

Invoking the half vertex operators \eqref{e:gamma1}-\eqref{e:gamma2}, we have that
\begin{align*}
&\langle 0|\Gamma_+(\mathbf{x}^{\pm};\mathbf{z})|\lambda^{sp}\rangle\\
&~~=\langle 0|\Gamma_+(\mathbf{x}^{\pm};\mathbf{z}^{\pm})\exp(\mathbf{z}^{-1})|\lambda^{sp}\rangle\\
&~~=\langle 0|\Gamma_+(\mathbf{x}^{\pm};\mathbf{z}^{\pm})\sum_{\eta=(\eta_1,\dots,\eta_{n+m})}|\eta^{sp}\rangle\langle\eta^{sp}|\exp(\mathbf{z}^{-1})|\lambda^{sp}\rangle\\
&~~=\sum_{\eta=(\eta_1,\dots,\eta_{n+m})}sp_{\eta}(\mathbf{x}^{\pm};\mathbf{z}^{\pm})\langle\eta^{sp}|(\sum^m_{s_1=0}e_{s_1}Y_{-\lambda_1+s_1})\cdots (\sum^m_{s_{n+m}=0}e_{s_{n+m}}Y_{-\lambda_{n+m}+s_{n+m}})|0\rangle\\
&~~=\sum^{m}_{s_1,\dots,s_{n+m}=0}e_{s_1}\cdots e_{s_{n+m}}sp_{\lambda_1-s_1,\dots,\lambda_{n+m}-s_{n+m}}(\mathbf{x}^{\pm};\mathbf{z}^{\pm})\\
&~~=\det(M_{ij})^{n+m}_{i,j=1},
\end{align*}
where the second and the fourth equations have used \eqref{e:sp15} and \eqref{e:sp28} respectively, and
\begin{equation}\label{e:sp9}
M_{ij}=\left\{\begin{aligned}
&\sum^m_{s_i=0}e_{s_i}h_{\lambda_i-s_i-\mu_j-i+j}(\mathbf{x}^{\pm};\mathbf{z}^{\pm})~~&~~j=1,\\
&\sum^m_{s_i=0}e_{s_i}h_{\lambda_i-s_i-i+j}(\mathbf{x}^{\pm};\mathbf{z}^{\pm})+\sum^m_{s_i=0}e_{s_i}h_{\lambda_i-s_i-i-j+2}(\mathbf{x}^{\pm};\mathbf{z}^{\pm})~&~~2\leq j\leq n+m.
\end{aligned}
\right.
\end{equation}
Applying the generalized Newton identity \eqref{e:sp8}, we then finish the proof.
\end{proof}

\begin{theorem}\label{th2}
For general partitions $\beta=(\beta_1,\dots,\beta_l)$ and $\alpha=(\alpha_1,\dots,\alpha_{l+n})$, one has
\begin{align}\label{e:sp1}
\langle\beta^{sp}|\Gamma_+(\mathbf{x}^{\pm};\mathbf{z})|\alpha^{sp}\rangle=&\det\big{(}h_{\alpha_i-\beta_j-i+j}(\mathbf{x}^{\pm};\mathbf{z})+\delta_{j>l+1}h_{\alpha_i-i-j+2l+2}(\mathbf{x}^{\pm};\mathbf{z})\big{)}^{l+n}_{ i,j=1}\\
=&sp_{\alpha/\beta}(\mathbf{x}^{\pm};\mathbf{z}),
\end{align}
which is zero unless $\beta\subset\alpha$ and will be called 
the skew universal symplectic function associated to $\alpha/\beta$.
\end{theorem}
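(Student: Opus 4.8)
The plan is to mimic the proof of Theorem \ref{th1} almost verbatim, replacing the ground bra $\langle 0|$ with the partition bra $\langle\beta^{sp}|$ and replacing the closed symplectic Schur realization \eqref{e:sp26} with its skew counterpart from Theorem \ref{th7} (equation \eqref{e:sp28}). First I would factor the half vertex operator. Since every $\Gamma_+$ involves only the modes $a_n$ with $n>0$, these factors mutually commute, so
\[
\Gamma_+(\mathbf{x}^{\pm};\mathbf{z})=\Gamma_+(\mathbf{x}^{\pm};\mathbf{z}^{\pm})\exp(\mathbf{z}^{-1}),
\]
where $\exp(\mathbf{z}^{-1})=\exp\bigl(-\sum_{k\ge1}\tfrac{a_k}{k}(z_1^{-k}+\cdots+z_m^{-k})\bigr)$ is exactly the factor introduced in the proof of Theorem \ref{th1}. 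I would then insert the resolution of identity \eqref{e:sp15}, now summed over all partitions $\eta=(\eta_1,\dots,\eta_{l+n+m})$ (matching the length of $\alpha$), between the two factors to obtain
\[
\langle\beta^{sp}|\Gamma_+(\mathbf{x}^{\pm};\mathbf{z})|\alpha^{sp}\rangle=\sum_{\eta}\langle\beta^{sp}|\Gamma_+(\mathbf{x}^{\pm};\mathbf{z}^{\pm})|\eta^{sp}\rangle\,\langle\eta^{sp}|\exp(\mathbf{z}^{-1})|\alpha^{sp}\rangle.
\]

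Next I would evaluate the two bracket factors separately. For the left factor, $\Gamma_+(\mathbf{x}^{\pm};\mathbf{z}^{\pm})$ is the half vertex operator in the $n+m$ symplectic variables $\mathbf{x}^{\pm},\mathbf{z}^{\pm}$, so Theorem \ref{th7} (in the reordered form \eqref{e:sp28}) gives $\langle\beta^{sp}|\Gamma_+(\mathbf{x}^{\pm};\mathbf{z}^{\pm})|\eta^{sp}\rangle=sp_{\eta/\beta}(\mathbf{x}^{\pm};\mathbf{z}^{\pm})$, with the usual sign correction when $\eta$ is replaced by a non-partition index sequence. For the right factor, I would reuse the commutation identity $\exp(\mathbf{z}^{-1})Y_i=\sum_{s=0}^m e_sY_{i+s}$ established in the proof of Theorem \ref{th1} to expand
\[
\exp(\mathbf{z}^{-1})|\alpha^{sp}\rangle=\sum_{s_1,\dots,s_{l+n+m}=0}^{m}e_{s_1}\cdots e_{s_{l+n+m}}\,Y_{-\alpha_1+s_1}\cdots Y_{-\alpha_{l+n+m}+s_{l+n+m}}|0\rangle.
\]
Using the orthogonality \eqref{e:sp14} together with \eqref{e:sp28} to collapse the $\eta$-sum, the whole expression reduces to the single sum
\[
\sum_{s_1,\dots,s_{l+n+m}=0}^{m}e_{s_1}\cdots e_{s_{l+n+m}}\,sp_{(\alpha_1-s_1,\dots,\alpha_{l+n+m}-s_{l+n+m})/\beta}(\mathbf{x}^{\pm};\mathbf{z}^{\pm}).
\]

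Finally I would recognize this as a determinant. Expanding each $sp_{(\alpha-\mathbf{s})/\beta}(\mathbf{x}^{\pm};\mathbf{z}^{\pm})$ by its Jacobi--Trudi form from Theorem \ref{th7}, the sum over $\mathbf{s}$ distributes row-by-row (each $s_i$ is tied to row $i$), producing the matrix with entries $\sum_{s_i=0}^{m}e_{s_i}\bigl(h_{\alpha_i-s_i-\beta_j-i+j}(\mathbf{x}^{\pm};\mathbf{z}^{\pm})+\delta_{j>l+1}h_{\alpha_i-s_i-i-j+2l+2}(\mathbf{x}^{\pm};\mathbf{z}^{\pm})\bigr)$. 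Applying the generalized Newton identity \eqref{e:sp8} to each entry converts every $h_\bullet(\mathbf{x}^{\pm};\mathbf{z}^{\pm})$ into $h_\bullet(\mathbf{x}^{\pm};\mathbf{z})$, yielding exactly the claimed determinant; vanishing unless $\beta\subset\alpha$ is inherited from the same property of the skew symplectic Schur functions in Theorem \ref{th7}.

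The main obstacle I anticipate is the bookkeeping in the collapse step: I must verify that inserting the resolution of identity over partitions $\eta$ and then re-expanding, combined with the reordering sign rule \eqref{e:sp28}, exactly reproduces the unrestricted sum over shift vectors $\mathbf{s}$ (including contributions where $\alpha-\mathbf{s}$ fails to be a partition, which supply the determinant's sign corrections), and that the $s_i$-convolution respects the column structure of the skew determinant---in particular that the $\beta_j$-shift and the folding term $\delta_{j>l+1}h_{\alpha_i-i-j+2l+2}$ survive verbatim when the Newton identity is applied columnwise. This is the same mechanism already used in Theorem \ref{th1}, so once the nonzero-$\beta$ version of \eqref{e:sp28} is in hand the remaining manipulations are routine determinant algebra.
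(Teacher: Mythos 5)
Your proposal is correct and is exactly the argument the paper intends: Theorem \ref{th2} is stated with only the remark that it follows ``using the similar method'' as Theorem \ref{th1} together with the skew realization of Theorem \ref{th7}, and your write-up fleshes out precisely that route --- factoring $\Gamma_+(\mathbf{x}^{\pm};\mathbf{z})=\Gamma_+(\mathbf{x}^{\pm};\mathbf{z}^{\pm})\exp(\mathbf{z}^{-1})$, inserting the resolution of identity \eqref{e:sp15}, collapsing via \eqref{e:sp14} and the sign rule \eqref{e:sp28}, and finishing with the generalized Newton identity \eqref{e:sp8} applied row-by-row in the skew Jacobi--Trudi determinant. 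Your anticipated ``bookkeeping'' concern is handled the same way as in the paper's proof of Theorem \ref{th1}, so nothing further is needed.
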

\begin{proof} Let $\Gamma_+(\mathbf{z})=\prod^m_{j=1}\Gamma_+(z_j)$, then
\begin{align*}
\Gamma_+(\mathbf{z})Y(w)=\prod^m_{i=1}\frac{1}{1-z_iw}Y(w)\Gamma_+(\mathbf{z}).
\end{align*}
In terms of components,
\begin{align*}
\Gamma_+(\mathbf{z})Y_i=\sum^\infty_{s=0}h_s(\mathbf{z})Y_{i+s}.
\end{align*}
Subsequently by \eqref{e:sp27}
\begin{align}
\notag\Gamma_+(\mathbf{z})|\alpha^{sp}\rangle&=(\sum^\infty_{s_1=0}h_{s_1}(\mathbf{z})Y_{-\alpha_1+s_1})\cdots (\sum^\infty_{s_{l+n}=0}h_{s_{l+n}}(\mathbf{z})Y_{-\alpha_{l+n}+s_{l+n}})|0\rangle
\end{align}
Using \eqref{e:sp15} and \eqref{e:sp28}, we have
\begin{align}
\notag&\langle\beta^{sp}|\Gamma_+(\mathbf{x}^{\pm};\mathbf{z})|\alpha^{sp}\rangle\\
\notag&~~~~=\langle\beta^{sp}|\Gamma_+(\mathbf{x}^{\pm})\sum_{\eta=(\eta_1,\dots,\eta_{l+n})}|\eta^{sp}\rangle\langle\eta^{sp}|\Gamma_+(\mathbf{z})|\alpha^{sp}\rangle\\
\notag&~~~~=\sum_{\eta=(\eta_1,\dots,\eta_{l+n})}sp_{\eta/\beta}(\mathbf{x}^{\pm})\langle\eta^{sp}|(\sum^\infty_{s_1=0}h_{s_1}(\mathbf{z})Y_{-\alpha_1+s_1})\cdots (\sum^\infty_{s_{l+n}=0}h_{s_{l+n}}(\mathbf{z})Y_{-\alpha_{l+n}+s_{l+n}})|0\rangle\\
\notag&~~~~=\sum^{\infty}_{s_1,\dots,s_{l+n}=0}h_{s_1}(\mathbf{z})\cdots h_{s_{n+m}}(\mathbf{z})sp_{(\alpha_1-\eta_1,\dots,\alpha_{l+n}-\eta_{l+n})/(\beta_1,\dots,\beta_l)}(\mathbf{x}^{\pm})\\
\notag&~~~~=\det(N_{ij})^{l+n}_{i,j=1},
\end{align}
where \begin{equation}
N_{ij}=\left\{\begin{aligned}
&\sum^\infty_{s_i=0}h_{s_i}(\mathbf{z})h_{\alpha_i-s_i-\beta_j-i+j}(\mathbf{x}^{\pm})~~&~~1\leq j\leq l+1,\\
&\sum^\infty_{s_i=0}h_{s_i}(\mathbf{z})h_{\alpha_i-s_i-\beta_j-i+j}(\mathbf{x}^{\pm})+\sum^\infty_{s_i=0}h_{s_i}(\mathbf{z})h_{\alpha_i-s_i-i-j+2l+2}(\mathbf{x}^{\pm})~&~~l+2\leq j\leq l+n.
\end{aligned}
\right.
\end{equation}
So \eqref{e:sp1} is proved by the generalized Newton identity \eqref{e:sp50}.
\end{proof}
\begin{remark} From the proofs of Theorems \ref{th1} and \ref{th2}, there are two different ways to prove $\langle0|\Gamma_+(\mathbf{x}^{\pm};\mathbf{z})|\lambda^{sp}\rangle=sp_\lambda(\mathbf{x}^{\pm};\mathbf{z})$ for $\lambda=(\lambda_1,\dots,\lambda_n)$.
\end{remark}
\subsection{General branching rule for the universal symplectic functions} Using skew universal symplectic functions, we can obtain a new branching rule.
\begin{theorem}\label{th4}For partition $\lambda=(\lambda_1,\dots,\lambda_n)$, we have
\begin{align}\label{e:sp29}
sp_{\lambda}(\mathbf{x}^{\pm};\mathbf{z})=\sum_{\mu=(\mu_1,\dots,\mu_{n-k})\subset \lambda}sp_{\mu}(\mathbf{\bar{x}}^{\pm})sp_{\lambda/\mu}(\mathbf{\bar{\bar{x}}}^{\pm};\mathbf{z}),
\end{align}
where $\mathbf{\bar{x}}^{\pm}=(x^{\pm}_1,\dots,x_{n-k}^{\pm}),~\mathbf{\bar{\bar{x}}}^{\pm}=(x^{\pm}_{n-k+1},\dots,x_n^{\pm}),~\mathbf{\bar{z}}=(z_1,\dots,z_m)$.
\end{theorem}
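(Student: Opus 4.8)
The plan is to read the branching rule \eqref{e:sp29} directly off the vertex operator realization of Theorem \ref{th1}, by splitting the operator $\Gamma_+(\mathbf{x}^{\pm};\mathbf{z})$ along the two blocks of variables and inserting a resolution of the identity between the two halves. First I would record that, since every factor of $\Gamma_+$ is built solely from the positive modes $a_n$ $(n>0)$, which commute with one another by \eqref{e:he1}, the half vertex operator is multiplicative in its alphabet, so that
\[
\Gamma_+(\mathbf{x}^{\pm};\mathbf{z})=\Gamma_+(\mathbf{\bar{x}}^{\pm};\mathbf{\bar{z}})\,\Gamma_+(\mathbf{\bar{\bar{x}}}^{\pm};\mathbf{\bar{\bar{z}}}).
\]
Substituting this into $sp_\lambda(\mathbf{x}^{\pm};\mathbf{z})=\langle0|\Gamma_+(\mathbf{x}^{\pm};\mathbf{z})|\lambda^{sp}\rangle$ and inserting between the two factors the resolution of the identity \eqref{e:sp15}, with the sum taken over partitions $\mu=(\mu_1,\dots,\mu_{n-k+m-s})$ of $N:=n-k+m-s$ parts, would give
\[
sp_\lambda(\mathbf{x}^{\pm};\mathbf{z})=\sum_{\mu}\langle0|\Gamma_+(\mathbf{\bar{x}}^{\pm};\mathbf{\bar{z}})|\mu^{sp}\rangle\,\langle\mu^{sp}|\Gamma_+(\mathbf{\bar{\bar{x}}}^{\pm};\mathbf{\bar{\bar{z}}})|\lambda^{sp}\rangle.
\]

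Next I would identify the two factors. The left one is $sp_\mu(\mathbf{\bar{x}}^{\pm};\mathbf{\bar{z}})$ by Theorem \ref{th1} applied with the $n-k$ variables $\mathbf{\bar{x}}$ and the $m-s$ variables $\mathbf{\bar{z}}$, which is exactly the setting in which $\mu$ carries $N$ parts. The right one is $sp_{\lambda/\mu}(\mathbf{\bar{\bar{x}}}^{\pm};\mathbf{\bar{\bar{z}}})$ by Theorem \ref{th2} applied with the $k$ variables $\mathbf{\bar{\bar{x}}}$ and the $s$ variables $\mathbf{\bar{\bar{z}}}$; here the length bookkeeping of Theorem \ref{th2}, namely $l(\lambda)=l(\mu)+(k+s)$, is automatically satisfied because $N+(k+s)=n+m=l(\lambda)$, and it is precisely this constraint that pins down the summation index $\mu$ to have $N$ parts. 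Finally, Theorem \ref{th2} guarantees that $sp_{\lambda/\mu}(\mathbf{\bar{\bar{x}}}^{\pm};\mathbf{\bar{\bar{z}}})=0$ unless $\mu\subset\lambda$, which collapses the sum over all $N$-part $\mu$ into the sum over $\mu\subset\lambda$ appearing in \eqref{e:sp29}.

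The step I expect to be the main obstacle is the legitimacy of the identity insertion at the length $N$. The resolution \eqref{e:sp15} acts as the identity only on the span of the $N$-part partition elements, whereas $\Gamma_+(\mathbf{\bar{\bar{x}}}^{\pm};\mathbf{\bar{\bar{z}}})|\lambda^{sp}\rangle$ a priori lies in the larger sector spanned by the $(n+m)$-part elements; moreover, since $\langle0|Y^*_0\neq\langle0|$ by \eqref{e:com1}, the bra $\langle\mu^{sp}|$ genuinely depends on its recorded number of parts, so that only the length-$N$ bra is compatible with Theorem \ref{th2}. I would resolve this by checking that $\langle0|\Gamma_+(\mathbf{\bar{x}}^{\pm};\mathbf{\bar{z}})$, regarded as a functional on the $(n+m)$-part sector, is supported on the $N$-part elements: concretely, $\langle0|\Gamma_+(\mathbf{\bar{x}}^{\pm};\mathbf{\bar{z}})|\beta^{sp}\rangle=sp_\beta(\mathbf{\bar{x}}^{\pm};\mathbf{\bar{z}})$ vanishes once $\beta$ has more than $N$ nonzero parts, since a symplectic-type character in $N$ variables vanishes on partitions that are too long. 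Combined with the orthonormality \eqref{e:sp14} (extended to partition elements of different recorded lengths through the commutation relations \eqref{e:com1}), this shows that only the $N$-part components of $\Gamma_+(\mathbf{\bar{\bar{x}}}^{\pm};\mathbf{\bar{\bar{z}}})|\lambda^{sp}\rangle$ survive the pairing, which is what makes the $N$-part resolution act as the identity here and completes the argument.
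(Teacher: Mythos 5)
Your skeleton is exactly the paper's proof: factor $\Gamma_+(\mathbf{x}^{\pm};\mathbf{z})=\Gamma_+(\mathbf{\bar{x}}^{\pm};\mathbf{\bar{z}})\,\Gamma_+(\mathbf{\bar{\bar{x}}}^{\pm};\mathbf{\bar{\bar{z}}})$ (legitimate, since the modes $a_n$ with $n>0$ commute), insert $\sum_{\mu}|\mu^{sp}\rangle\langle\mu^{sp}|$ over partitions $\mu$ with $N=n-k+m-s$ parts, identify the two factors via Theorems \ref{th1} and \ref{th2}, and cut the sum down to $\mu\subset\lambda$ by the vanishing of the skew factor. You are also right that the only nontrivial point is the legitimacy of the length-$N$ insertion; the paper disposes of it in one unproved sentence, asserting that $\langle 0|\Gamma_{+}(\bar{\mathbf{x}}^{\pm};\bar{\mathbf{z}})$ lies in the span of the $N$-part bras, and your plan amounts to supplying a proof of exactly that assertion.

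The gap is in the lemma you invoke for it: the claim that $sp_\beta(\mathbf{\bar{x}}^{\pm};\mathbf{\bar{z}})$ vanishes once $\beta$ has more than $N$ nonzero parts is false whenever the unpaired alphabet $\mathbf{\bar{z}}$ is nonempty. Universal symplectic functions obey Koike--Terada modification rules, not plain vanishing, on overlong partitions: for $N=1$, $\mathbf{\bar{z}}=(z)$ and $\beta=(1,1)$ one has
\begin{align*}
sp_{(1,1)}(z)=\det\begin{pmatrix} h_1(z) & h_2(z)+h_0(z)\\ h_0(z) & h_1(z)+h_{-1}(z)\end{pmatrix}=z\cdot z-(z^2+1)=-1\neq 0.
\end{align*}
At the operator level this is the statement that $\langle0|\Gamma_+(z)|(1,1)^{sp}\rangle$, the coefficient of $w_1w_2$ in $(1-w_2/w_1)(1-w_1w_2)\big/\big((1-zw_1)(1-zw_2)\big)$, equals $-1$, whereas the straightening relations \eqref{e:com1} give $\langle (a)^{sp}|(1,1)^{sp}\rangle=0$ for every $a\geq 0$; so $\langle0|\Gamma_+(z)$ is \emph{not} supported on the one-part bras, and the length-$1$ resolution is not transparent to it. Nor is this a repairable technicality within your scheme: taking $n=k=0$, $m=2$, $s=1$, $\lambda=(1,1)$, the right-hand side of \eqref{e:sp29} is $sp_{(0)}(z_1)sp_{(1,1)/(0)}(z_2)+sp_{(1)}(z_1)sp_{(1,1)/(1)}(z_2)=0+z_1z_2$, while the left-hand side is $sp_{(1,1)}(z_1,z_2)=z_1z_2-1$, the defect being exactly the dropped length-$2$ term $sp_{(1,1)}(z_1)\,sp_{(1,1)/(1,1)}(z_2)=(-1)\cdot 1$ that a full-length insertion would retain. (The same defect afflicts the paper's one-line justification, which asserts precisely the support property your lemma was meant to establish.) Your argument is sound only when $\mathbf{\bar{z}}=\emptyset$, i.e.\ $s=m$, where \cite[Prop. 3.2]{JLW2022} does expand $\langle0|\Gamma_+(\bar{\mathbf{x}}^{\pm})$ over $(n-k)$-part bras and the insertion, hence \eqref{e:sp29} and its special case \eqref{e:sp32}, goes through; when $\mathbf{\bar{z}}\neq\emptyset$ the resolution must be inserted at full length $n+m$, with $sp_\mu$ and $sp_{\lambda/\mu}$ recorded at that length, which is a different statement from the one displayed.
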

\begin{proof} It follows from Theorems \ref{th1} and \ref{th2} that
\begin{align*}
sp_{\lambda}(\mathbf{x}^{\pm};\mathbf{z})=&\langle 0|\Gamma_{+}(\mathbf{x}^{\pm};\mathbf{z})|\lambda^{sp}\rangle\\
=&\langle 0|\Gamma_{+}(\mathbf{\bar{x}}^{\pm})\sum_{\mu=(\mu_1,\dots,\mu_{n-k})}|\mu^{sp}\rangle\langle\mu^{sp}|\Gamma_{+}(\mathbf{\bar{\bar{x}}}^{\pm};\mathbf{z})|\lambda^{sp}\rangle\\
=&\sum_{\mu=(\mu_1,\dots,\mu_{n-k})\subset \lambda}sp_{\mu}(\mathbf{\bar{x}}^{\pm})sp_{\lambda/\mu}(\mathbf{\bar{\bar{x}}}^{\pm};\mathbf{z}).
\end{align*}
Note that the vector $\langle 0|\Gamma_{+}(\bar{\mathbf{x}}^{\pm})$ belongs to
 the subspace spanned by $\langle\alpha^{sp}|$ with $\alpha=(\alpha_1,\dots,\alpha_{n-k})$, on which the operator
 $\sum_{\mu}|\mu^{sp}\rangle\langle\mu^{sp}|$ acts as an identity  (see \eqref{e:sp15}).
\end{proof}
\begin{remark}For $m=0$, \eqref{e:sp29} reduces to the general branching rule for the symplectic Schur functions.
\end{remark}

\subsection{Odd symplectic characters}
Proctor\cite{Pr1988} studied the representation theory of odd symplectic group $Sp_{2n+1}$, and obtained the character $sp_\lambda(\mathbf{x}^{\pm};z)$ of indecomposable $Sp_{2n+1}-$module $V_\lambda$. The characters are called the {\it odd symplectic characters}. Based on the Cauchy-Binet formula and the Cauchy determinant, Okada has given the following bialternant form for $sp_\lambda(\mathbf{x}^{\pm};z)$ \cite[Thm. 1.1]{Ok2020}
\begin{align*}
sp_\lambda(\mathbf{x}^{\pm};z)=\frac{\det(a_{ij})^{n+1}_{i,j=1}}{\det{(b_{ij})^{n+1}_{i,j=1}}}
\end{align*}
with
\begin{equation}
a_{ij}=\left\{\begin{aligned}
&x^{\lambda_j+(n-j+2)}_i-x^{-\lambda_j-(n-j+2)}_i-z^{-1}(x^{\lambda_j+(n-j+1)}_i-x^{-\lambda_j-(n-j+1)}_i)~~&~~1\leq i\leq n,\\
&z^{\lambda_j+n-j+2}-z^{\lambda_j+n-j}~&~~i=n+1,
\end{aligned}
\right.
\end{equation}
and $b_{ij}=x^{n-j+2}_i-x^{-(n-j+2)}_i$ for $1\leq i\leq n$, $b_{n+1,j}=z^{n-j+2}-z^{-(n-j+2)}$.

The following result gives a vertex operator realization of the odd symplectic character $sp_\lambda(\mathbf{x}^{\pm};z)$ and it turns out to be a specialization of the universal symplectic
character $sp_\lambda(\mathbf{x}^{\pm};\mathbf{z})$ when $\mathbf{z}$ is a single parameter $z$ or $m=1$. (This also justifies the notation!)
\begin{theorem}\label{th3}
For general partition $\lambda=(\lambda_1,\dots,\lambda_{n+1})$, we have
\begin{align}\label{e:sp30}
\langle 0|\Gamma_+(\mathbf{x}^{\pm};z)|\lambda^{sp}\rangle~=~sp_\lambda(\mathbf{x}^{\pm};z).
\end{align}
\end{theorem}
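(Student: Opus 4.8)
The plan is to show that the specialization $\mathbf{z} = (z)$ (i.e. $m=1$) of the universal symplectic function $sp_\lambda(\mathbf{x}^\pm;\mathbf{z})$ recovers Proctor's odd symplectic character by verifying that the vertex operator expression agrees with Okada's bialternant formula. Since Theorem \ref{th1} already gives $\langle 0|\Gamma_+(\mathbf{x}^\pm;z)|\lambda^{sp}\rangle = sp_\lambda(\mathbf{x}^\pm;z)$ in the sense of the determinantal definition \eqref{e:uni} with $m=1$ and $n+m = n+1$, the real content of Theorem \ref{th3} is the identification of this determinant with the odd symplectic character defined representation-theoretically by Proctor. First I would invoke Theorem \ref{th1} directly with $m=1$ so that the left-hand side of \eqref{e:sp30} equals the Jacobi--Trudi-type determinant
\begin{align*}
\det\big(h_{\lambda_i-i+j}(\mathbf{x}^\pm;z) + \delta_{j>1} h_{\lambda_i-i-j+2}(\mathbf{x}^\pm;z)\big)^{n+1}_{i,j=1}.
\end{align*}

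Next I would reduce the claim to a purely combinatorial identity between this determinant and Okada's ratio $\det(a_{ij})/\det(b_{ij})$. The natural route is to convert the ``homogeneous'' determinant into the bialternant form. Here the key tool is the dual Jacobi--Trudi / bialternant correspondence for symplectic-type functions: the terms $h_k(\mathbf{x}^\pm;z) + \delta_{j>1}h_{\dots}$ with the $z$-parameter encode, via the generating function $\prod_i\frac{1}{(1-x_iw)(1-x_i^{-1}w)}\cdot\frac{1}{(1-zw)}$, exactly the structure needed so that the determinant expands as a ratio of alternants in the variables $x_i^{\pm}$ together with $z$. I would therefore compute $\det(b_{ij})$ (the denominator alternant built from $x_i^{n-j+2}-x_i^{-(n-j+2)}$ and the row $z^{n-j+2}-z^{-(n-j+2)}$) as the common denominator, and show that multiplying the homogeneous determinant by this alternant reproduces $\det(a_{ij})$. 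The appearance of the asymmetric entry $z^{\lambda_j+n-j+2}-z^{\lambda_j+n-j}$ in the last row of Okada's $a_{ij}$ (as opposed to a symmetric $z^{+}-z^{-}$ expression) reflects precisely that $z$ enters through the single factor $\frac{1}{(1-zw)}$ rather than through a symplectic pair $\frac{1}{(1-zw)(1-z^{-1}w)}$.

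An alternative and arguably cleaner approach, consistent with the vertex-operator philosophy of the paper, is to avoid the bialternant entirely and instead establish \eqref{e:sp30} by a branching/Cauchy argument: one can apply the general branching rule \eqref{e:sp29} or the Cauchy-type identity of Theorem \ref{th5} specialized to $m=1$, and compare with the analogous decomposition of Proctor's characters under the restriction $\Sp_{2n+1}\downarrow\Sp_{2n}$. Since Proctor's odd symplectic characters are known to satisfy $sp_\lambda(\mathbf{x}^\pm;z) = \sum_{\mu} sp_\mu(\mathbf{x}^\pm)\, z^{|\lambda|-|\mu|}\cdot(\text{interlacing condition})$, matching this against the $m=1$ case of \eqref{e:sp29} would give the result once the base case $n=0$ (the single-variable character) is checked. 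I would likely present the bialternant comparison as the main line since Okada's formula is quoted explicitly just above the theorem.

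The hard part will be the bialternant identification: carefully tracking how the generalized homogeneous functions $h_k(\mathbf{x}^\pm;z)$ and the ``folded'' term $\delta_{j>1}h_{\lambda_i-i-j+2}$ combine to produce Okada's asymmetric last row, and in particular verifying that the $z$-dependence emerges as $z^{\lambda_j+n-j+2}-z^{\lambda_j+n-j}$ rather than a symmetric difference. This requires a determinant manipulation (multiplying by the Vandermonde-type denominator and collecting coefficients) that is routine in structure but delicate in the index bookkeeping around the odd row $i=n+1$; I expect this column/row matching to be where the argument needs the most care.
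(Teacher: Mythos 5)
Your reduction via Theorem \ref{th1} is sound as far as it goes: the matrix element $\langle 0|\Gamma_+(\mathbf{x}^{\pm};z)|\lambda^{sp}\rangle$ does equal the Jacobi--Trudi determinant $\det\big(h_{\lambda_i-i+j}(\mathbf{x}^{\pm};z)+\delta_{j>1}h_{\lambda_i-i-j+2}(\mathbf{x}^{\pm};z)\big)_{i,j=1}^{n+1}$. But this moves the entire content of the theorem into the step you defer, namely the identity between that determinant and Okada's bialternant $\det(a_{ij})/\det(b_{ij})$, and your description of that step (``multiplying by the Vandermonde-type denominator and collecting coefficients,'' ``routine in structure'') is not a proof. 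The product of the $h$-determinant with the $(n+1)\times(n+1)$ denominator alternant does not localize column by column; one needs a Cauchy--Binet expansion over infinite column sets (the route of \cite{FK1997,Ok2020}) or an equivalent lattice-path argument, and the cancellation producing the asymmetric last row $z^{\lambda_j+n-j+2}-z^{\lambda_j+n-j}$ must be exhibited, not just motivated by the single factor $1/(1-zw)$. Note also the logical order in the paper: the Jacobi--Trudi identity for $sp_\lambda(\mathbf{x}^{\pm};z)$ is the Corollary \emph{deduced from} Theorem \ref{th3} together with Theorem \ref{th1}, so your plan amounts to proving that corollary independently --- legitimate in principle (it is essentially Krattenthaler's determinantal formula for the intermediate symplectic character of type $\Sp_{2n,1}$, \cite{Kr1995}), but it is exactly the hard part and is left open in your write-up. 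Your alternative branching route has the same status: it requires Proctor's restriction rule for $\Sp_{2n+1}\downarrow\Sp_{2n}$ as external input \cite{Pr1988}, plus a linear-independence argument to conclude equality from matching coefficients, neither of which you set up.

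The paper's actual proof avoids both difficulties by a different vertex-operator manipulation. It factors the half vertex operator as $\Gamma_+(z)=\Gamma_+(z)\Gamma_+(z^{-1})\cdot\exp\big(-\sum_{k\geq1}\frac{a_k}{k}z^{-k}\big)$, i.e.\ it treats $z$ as a full symplectic variable $x_{n+1}$ and corrects by the exponential factor. Commuting $\exp\big(-\sum_{k\geq1}\frac{a_k}{k}z^{-k}\big)$ through $Y(w)$ via BCH sends $Y_i\mapsto Y_i-z^{-1}Y_{i+1}$, so the matrix element becomes the signed sum $\sum_{\varepsilon\in\{0,1\}^{n+1}}(-z^{-1})^{|\varepsilon|}sp_{\lambda-\varepsilon}(\mathbf{x}^{\pm};z^{\pm})$ of ordinary symplectic Schur functions in $n+1$ variables (recorded as the transition formula, Theorem \ref{th10}); column multilinearity of the bialternant \eqref{e:sp26} then collapses this sum into Okada's ratio, the asymmetric row $i=n+1$ arising from the cancellation $z^{\lambda_j+n-j+2}-z^{-\lambda_j-(n-j+2)}-z^{-1}\big(z^{\lambda_j+n-j+1}-z^{-\lambda_j-(n-j+1)}\big)=z^{\lambda_j+n-j+2}-z^{\lambda_j+n-j}$ at $x_{n+1}=z$. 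If you want to salvage your line, either carry out the Cauchy--Binet comparison in full or cite \cite{Kr1995} for the missing determinant identity; as written, the proposal identifies the right target but does not prove it.
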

\begin{proof} It was known \cite[Prop. 3.2]{JLW2022} that $\langle 0|\Gamma_+(\mathbf{x}^{\pm};x^{\pm}_{n+1})$ is the generating function of the symplectic Schur function $sp_{\nu}(\mathbf{x}^{\pm};x^{\pm}_{n+1})$, i.e.,
\begin{align}
\langle 0|\Gamma_+(\mathbf{x}^{\pm};x^{\pm}_{n+1})~=\sum_{\nu=(\nu_1,\dots,\nu_{n+1})}\langle\nu^{sp}|sp_{\nu}(\mathbf{x}^{\pm};x^{\pm}_{n+1}).
\end{align}

It follows from the Baker-Campbell-Hausdorf formula that
\begin{align}
\exp\left(-\sum^\infty_{n=1}\frac{a_n}{n}z^{-n}\right)Y(w)~=~(1-\frac{w}{z})Y(w)\exp\left(-\sum^\infty_{n=1}\frac{a_n}{n}z^{-n}\right),
\end{align}
i.e. in terms of the components of $Y(w)$
\begin{align}
\exp\left(-\sum^\infty_{n=1}\frac{a_n}{n}z^{-n}\right)Y_i~=~(Y_i-z^{-1}Y_{i+1}) \exp\left(-\sum^\infty_{n=1}\frac{a_n}{n}z^{-n}\right).
\end{align}
Thus
\begin{align}
\exp\left(-\sum^\infty_{n=1}\frac{a_n}{n}z^{-n}\right)|\lambda^{sp}\rangle~=~(Y_{-\lambda_1}-z^{-1}Y_{-\lambda_1+1})\cdots (Y_{-\lambda_{n+1}}-z^{-1}Y_{-\lambda_{n+1}+1})|0\rangle.
\end{align}
Writing $z=x_{n+1}$, we therefore have that
\begin{align}
&\langle 0|\Gamma_+(\mathbf{x}^{\pm};x_{n+1})|\lambda^{sp}\rangle\\
\notag=~&\langle 0|\Gamma_+(\mathbf{x}^{\pm};x^{\pm}_{n+1})\exp\left(-\sum^\infty_{n=1}\frac{a_n}{n}x_{n+1}^{-n}\right)|\lambda^{sp}\rangle\\
\label{e:CB1}=~&\sum_{\nu=(\nu_1,\dots,\nu_{n+1})}\langle\nu^{sp}|sp_\nu(\mathbf{x}^{\pm};x^{\pm}_{n+1})(Y_{-\lambda_1}-x_{n+1}^{-1}Y_{-\lambda_1+1})\cdots (Y_{-\lambda_{n+1}}-x_{n+1}^{-1}Y_{-\lambda_{n+1}+1})|0\rangle\\
\label{e:CB2}=~&\sum_{\substack{\varepsilon_i\in\{0,1\}\\1\leq i\leq n+1}}(-x_{n+1}^{-1})^{|\varepsilon|}sp_{\lambda_1-\varepsilon_1,\dots,\lambda_{n+1}-\varepsilon_{n+1}}(\mathbf{x}^{\pm};x^{\pm}_{n+1})\\
\label{e:CB3}=~&\frac{\det(x^{\lambda_j+(n-j+2)}_i-x^{-\lambda_j-(n-j+2)}_i-x^{-1}_{n+1}(x^{\lambda_j+(n-j+1)}_i-x^{-\lambda_j-(n-j+1)}_i))^{n+1}_{i,j=1}}{\det(x^{n-j+2}_i-x^{-(n-j+2)}_i)^{n+1}_{i,j=1}}\\
\notag=~&sp_\lambda(\mathbf{x}^{\pm};x_{n+1}),
\end{align}
where $|\varepsilon|=\sum_{i}\varepsilon_i$. We remark that if $\lambda_i-\varepsilon_i+1=\lambda_{i+1}-\varepsilon_{i+1}$, then $sp_{\lambda_1-\varepsilon_1,\dots,\lambda_{n+1}-\varepsilon_{n+1}}(\mathbf{x}^{\pm};x^{\pm}_{n+1})=0$. In other words, $sp_{\lambda_1-\varepsilon_1,\dots,\lambda_{n+1}-\varepsilon_{n+1}}(\mathbf{x}^{\pm};x^{\pm}_{n+1})$ makes sense only when $(\lambda_1-\varepsilon_1,\dots,\lambda_{n+1}-\varepsilon_{n+1})$ is also a partition.
\end{proof}
\begin{remark} 
We have mentioned that the odd symplectic character $sp_\lambda(\mathbf{x}^{\pm};z)$ is a special case $(m=1)$ of the universal symplectic functions
$sp_\lambda(\mathbf{x}^{\pm};\mathbf{z})$ ($\mathbf{z}$ is an $m$-tuple parameter).
Taking $z=1$ in Theorem \ref{th3}, one recovers Proctor's bialternant formula for $sp_\lambda(\mathbf{x}^{\pm};1)$ \cite{Ok2020,Pr1988}. When $z=-1$ in Theorem \ref{th3}, one also obtains Krattenthaler's bideterminantal formula for $sp_\lambda(\mathbf{x}^{\pm};-1)$\cite{Ok2020,Kr1995}.
\end{remark}

As a consequence of Theorem \ref{th1}, we have the {\it Jacobi-Trudi identity} for $sp_\lambda(\mathbf{x}^{\pm};z)$:
\begin{corollary} For general partition $\lambda=(\lambda_1, \ldots, \lambda_{n+1})$,
\begin{align*}
sp_\lambda(\mathbf{x}^{\pm};z)=\det\big{(}h_{\lambda_i-i+j}(\mathbf{x}^{\pm};z)+\delta_{j>1}h_{\lambda_i-i-j+2}(\mathbf{x}^{\pm};z)\big{)}^{n+1}_{ i,j=1}.
\end{align*}
\end{corollary}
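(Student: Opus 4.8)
The plan is to obtain the Jacobi-Trudi identity as an immediate specialization of Theorem \ref{th1}, once we invoke the identification established in Theorem \ref{th3}. The essential observation is that Proctor's odd symplectic character is nothing but the universal symplectic function $sp_\lambda(\mathbf{x}^{\pm};\mathbf{z})$ in the case $m=1$, so its Jacobi-Trudi determinant is simply the $m=1$ instance of the determinantal formula \eqref{e:uni}.

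First I would recall from Theorem \ref{th3} the vertex-operator realization
\begin{align*}
\langle 0|\Gamma_+(\mathbf{x}^{\pm};z)|\lambda^{sp}\rangle=sp_\lambda(\mathbf{x}^{\pm};z),
\end{align*}
valid for any partition $\lambda=(\lambda_1,\dots,\lambda_{n+1})$, where the right-hand side is the genuine odd symplectic character of $\Sp_{2n+1}$. The left-hand side is precisely the expectation $\langle 0|\Gamma_+(\mathbf{x}^{\pm};\mathbf{z})|\lambda^{sp}\rangle$ appearing in Theorem \ref{th1} when the $m$-dimensional vector $\mathbf{z}=(z_1,\dots,z_m)$ collapses to the single parameter $\mathbf{z}=(z)$, i.e. $m=1$.

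Next I would apply Theorem \ref{th1} verbatim with $m=1$. Since $\lambda$ has length at most $n+1=n+m$, the hypotheses are met and the theorem yields
\begin{align*}
\langle 0|\Gamma_+(\mathbf{x}^{\pm};z)|\lambda^{sp}\rangle=\det\big(h_{\lambda_i-i+j}(\mathbf{x}^{\pm};z)+\delta_{j>1}h_{\lambda_i-i-j+2}(\mathbf{x}^{\pm};z)\big)^{n+1}_{i,j=1},
\end{align*}
where $h_i(\mathbf{x}^{\pm};z)$ is the generalized complete homogeneous function attached to the $m=1$ case of the defining generating product $\prod_{i=1}^{n}\frac{1}{(1-x_iw)(1-x^{-1}_iw)}\frac{1}{(1-zw)}$. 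Equating the two expressions for the same vertex-operator expectation delivers the asserted formula.

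There is essentially no obstacle here, since the analytic content has already been discharged in Theorems \ref{th1} and \ref{th3}; the argument is purely a matter of unwinding the $m=1$ specialization. The only points deserving a word of care are the consistency of notation—that the symbol $h_i(\mathbf{x}^{\pm};z)$ in the Corollary coincides with the $m=1$ reduction of $h_i(\mathbf{x}^{\pm};\mathbf{z})$, which is immediate from the generating-function definitions—and the size of the determinant, which is $n+1$ rather than $n$, reflecting that $\lambda$ ranges over partitions of length at most $n+1$ and matching exactly the index bound $n+m$ in Theorem \ref{th1} at $m=1$.
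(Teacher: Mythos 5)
Your proposal is correct and follows essentially the same route as the paper, which derives the Corollary exactly by combining the $m=1$ specialization of the determinantal formula in Theorem \ref{th1} with the identification $\langle 0|\Gamma_+(\mathbf{x}^{\pm};z)|\lambda^{sp}\rangle=sp_\lambda(\mathbf{x}^{\pm};z)$ from Theorem \ref{th3}. Your remarks on the determinant size $n+1=n+m$ and on the consistency of $h_i(\mathbf{x}^{\pm};z)$ with the $m=1$ generating function simply make explicit the details the paper leaves tacit.
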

The proof of  Theorem \ref{th3} also provides a {\it transition formula} from the symplectic Schur functions to
the odd symplectic characters.
\begin{theorem}\label{th10}Let $\mathbf{x}^{\pm}=(x^\pm_1,\dots,x^\pm_n)$ and $\mathbf{\underline{x}}^{\pm}=(x^\pm_1,\dots,x^\pm_n,z^{\pm})$. Then
\begin{align}
sp_\lambda(\mathbf{x}^{\pm};z)=\sum_{\substack{\varepsilon_i\in\{0,1\}\\1\leq i\leq n+1}}(-z^{-1})^{|\varepsilon|}sp_{\lambda_1-\varepsilon_1,\dots,\lambda_{n+1}-\varepsilon_{n+1}}(\mathbf{\underline{x}}^{\pm}).
\end{align}
\end{theorem}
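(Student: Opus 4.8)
The plan is to recognize that this transformation formula is already contained in the computation carried out for Theorem~\ref{th3}; the only genuinely new step is to reinterpret the symplectic Schur functions appearing there as functions in $n+1$ pairs of variables. Writing $\mathbf{\underline{x}}^{\pm}=(x^\pm_1,\dots,x^\pm_n,z^{\pm})$, the key observation is that the symplectic Schur function $sp_\nu(\mathbf{x}^{\pm};z^{\pm})$ in the augmented alphabet, in which $z^{\pm}$ is adjoined as an extra symplectic pair, is exactly $sp_\nu(\mathbf{\underline{x}}^{\pm})$. Consequently the intermediate quantity produced in the proof of Theorem~\ref{th3} is precisely the right-hand side of the claimed identity, and the task reduces to making this identification explicit.

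First I would factor the half vertex operator. Since all the operators $\Gamma_+(w)$ commute and $\exp(-\sum_{n\ge 1}\tfrac{a_n}{n}z^{-n})=\Gamma_+(z^{-1})^{-1}$, the definition \eqref{e:gamma2} gives
\begin{align*}
\Gamma_+(\mathbf{x}^{\pm};z)=\Gamma_+(\mathbf{\underline{x}}^{\pm})\exp\left(-\sum^\infty_{n=1}\frac{a_n}{n}z^{-n}\right),
\end{align*}
because the trailing exponential cancels the factor $\Gamma_+(z^{-1})$ present in $\Gamma_+(\mathbf{\underline{x}}^{\pm})$ but absent from $\Gamma_+(\mathbf{x}^{\pm};z)$. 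Next I would push this exponential through the partition element $|\lambda^{sp}\rangle$ using the Baker--Campbell--Hausdorff relation already established in the proof of Theorem~\ref{th3}, namely $\exp(-\sum_{n}\tfrac{a_n}{n}z^{-n})Y_i=(Y_i-z^{-1}Y_{i+1})\exp(-\sum_{n}\tfrac{a_n}{n}z^{-n})$, which yields
\begin{align*}
\exp\left(-\sum^\infty_{n=1}\frac{a_n}{n}z^{-n}\right)|\lambda^{sp}\rangle=\prod^{n+1}_{i=1}(Y_{-\lambda_i}-z^{-1}Y_{-\lambda_i+1})|0\rangle=\sum_{\varepsilon\in\{0,1\}^{n+1}}(-z^{-1})^{|\varepsilon|}|(\lambda-\varepsilon)^{sp}\rangle,
\end{align*}
where $\lambda-\varepsilon=(\lambda_1-\varepsilon_1,\dots,\lambda_{n+1}-\varepsilon_{n+1})$. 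Pairing on the left with $\langle 0|\Gamma_+(\mathbf{\underline{x}}^{\pm})$ and applying the vertex operator realization \eqref{e:sp26} of Theorem~\ref{th6} in the $n+1$ pairs $\mathbf{\underline{x}}^{\pm}$ converts each summand $\langle 0|\Gamma_+(\mathbf{\underline{x}}^{\pm})|(\lambda-\varepsilon)^{sp}\rangle$ into $sp_{\lambda-\varepsilon}(\mathbf{\underline{x}}^{\pm})$. Combined with the identity $sp_\lambda(\mathbf{x}^{\pm};z)=\langle 0|\Gamma_+(\mathbf{x}^{\pm};z)|\lambda^{sp}\rangle$ of Theorem~\ref{th3}, this is the asserted formula.

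The only delicate point I anticipate is that $\lambda-\varepsilon$ need not be a partition for every $\varepsilon$, so the symbols $sp_{\lambda-\varepsilon}(\mathbf{\underline{x}}^{\pm})$ must be interpreted with care. Here the vertex operator formalism performs the bookkeeping automatically: by the commutation relations \eqref{e:com1}, any element $|(\lambda-\varepsilon)^{sp}\rangle$ whose index sequence fails to be a partition is either zero (precisely when $\lambda_i-\varepsilon_i+1=\lambda_{i+1}-\varepsilon_{i+1}$) or equals a genuine partition element up to sign after straightening, exactly as recorded in the remark following the proof of Theorem~\ref{th3}. Hence no separate case analysis is needed, and the $2^{n+1}$ terms collect into the stated sum with the ill-formed terms contributing zero. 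In short, the argument is a direct reading of the proof of Theorem~\ref{th3}, with the substitution $z=x_{n+1}$ made explicit and the augmented-alphabet symplectic Schur function $sp_\nu(\mathbf{x}^{\pm};z^{\pm})$ identified with $sp_\nu(\mathbf{\underline{x}}^{\pm})$.
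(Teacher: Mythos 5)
Your proposal is correct and is essentially the paper's own argument: the paper derives Theorem~\ref{th10} as a direct by-product of the proof of Theorem~\ref{th3}, which contains exactly your factorization $\Gamma_+(\mathbf{x}^{\pm};z)=\Gamma_+(\mathbf{\underline{x}}^{\pm})\exp\bigl(-\sum_{n\geq 1}\tfrac{a_n}{n}z^{-n}\bigr)$, the same BCH commutation through $|\lambda^{sp}\rangle$, and the same identification of $sp_\nu(\mathbf{x}^{\pm};z^{\pm})$ with the symplectic Schur function in $n+1$ pairs of variables. Your handling of the non-partition terms (vanishing when $\lambda_i-\varepsilon_i+1=\lambda_{i+1}-\varepsilon_{i+1}$) matches the remark following the paper's proof of Theorem~\ref{th3}.
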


 According to the Gelfand-Tsetlin representation of $sp_{\mu}(\mathbf{x}^{\pm})$ (cf. \cite[Thm. 4.1]{JLW2022}, \cite[Thm. 2.6]{AF2020}), we
 also obtain a Gelfand-Tsetlin representation for the odd symplectic characters (see \cite[(2.6)]{WW2009}).
\begin{theorem}For partition $\lambda=(\lambda_1,\dots,\lambda_n)$,
\begin{align}\label{e:sp11}
sp_{\lambda}(\mathbf{x}^{\pm};x_{n+1})=\sum_{\emptyset=z_0\prec z_1\prec\dots\prec z_{2n}\prec z_{2n+1}=\lambda}x^{|z_{2n+1}|-|z_{2n}|}_{n+1}\prod^n_{i=1}x^{2|z_{2i-1}|-|z_{2i}|-|z_{2i-2}|}_i,
\end{align}
where partitions $z_k=(z_{k,1},\dots,z_{k,\lceil\frac{ k}{2}\rceil})$ satisfy the symplectic Gelfand-Tsetlin pattern
\begin{align*}
z_{k+1,j}\leq z_{k,j-1}\leq z_{k+1,j-1}~~~~\text{for}~~~~ 2\leq j\leq \lceil\frac{k+1}{2}\rceil.
\end{align*}
\end{theorem}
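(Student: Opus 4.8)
The plan is to obtain \eqref{e:sp11} by combining two ingredients already established: the transition formula from symplectic Schur functions to odd symplectic characters derived above (the specialization of \eqref{e:sp32} to $k=0$), and the symplectic Gelfand-Tsetlin representation of $sp_\mu(\mathbf{x}^{\pm})$ recalled from \cite[Thm.~4.1]{JLW2022} and \cite[Thm.~2.6]{AF2020}. Concretely, setting $z=x_{n+1}$ in that transition formula gives
\begin{align*}
sp_\lambda(\mathbf{x}^{\pm};x_{n+1})=\sum_{\mu\prec\lambda}sp_\mu(\mathbf{x}^{\pm})\,x_{n+1}^{|\lambda|-|\mu|},
\end{align*}
where, although the sum was written over $\mu\subset\lambda$, it effectively ranges only over $\mu\prec\lambda$: the one-variable skew factor $sp_{\lambda/\mu}(x_{n+1})=s_{\lambda/\mu}(x_{n+1})$ vanishes unless $\lambda/\mu$ is a horizontal strip, i.e.\ unless $\mu\prec\lambda$ (with $\lambda$ padded to length $n+1$ by a trailing zero, consistent with Theorem \ref{th3}).

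Next I would substitute into this the symplectic Gelfand-Tsetlin representation
\begin{align*}
sp_\mu(\mathbf{x}^{\pm})=\sum_{\emptyset=z_0\prec z_1\prec\dots\prec z_{2n}=\mu}\ \prod^n_{i=1}x^{2|z_{2i-1}|-|z_{2i}|-|z_{2i-2}|}_i,
\end{align*}
the sum being over symplectic Gelfand-Tsetlin patterns terminating at $\mu$, and then adjoin the top row by setting $z_{2n}=\mu$ and $z_{2n+1}=\lambda$. Under this identification the interlacing $\mu\prec\lambda$ becomes $z_{2n}\prec z_{2n+1}$, the weight $x_{n+1}^{|\lambda|-|\mu|}$ becomes $x_{n+1}^{|z_{2n+1}|-|z_{2n}|}$, and the two nested sums collapse into a single sum over chains $\emptyset=z_0\prec z_1\prec\dots\prec z_{2n}\prec z_{2n+1}=\lambda$, which is exactly the asserted formula.

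The steps demanding the most care are bookkeeping rather than conceptual. First, I must check that the shape constraints $z_k=(z_{k,1},\dots,z_{k,\lceil k/2\rceil})$ and the inequalities $z_{k+1,j}\le z_{k,j-1}\le z_{k+1,j-1}$ used in \cite{JLW2022,AF2020} for the lower $2n$ rows extend to the adjoined top row $z_{2n+1}=\lambda$, which has $\lceil(2n+1)/2\rceil=n+1$ entries once $\lambda$ is padded by a trailing zero; this reduces to verifying that the horizontal-strip condition $\mu\prec\lambda$ is precisely the top instance of the interlacing inequalities, which is immediate from the definition of $\prec$. Second, I must confirm that the single-variable specialization gives $sp_{\lambda/\mu}(x_{n+1})=x_{n+1}^{|\lambda|-|\mu|}$ on horizontal strips, already recorded above via the Jacobi-Trudi identity \eqref{e:sp1}. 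Granting these, the substitution is purely formal. As an alternative that avoids invoking the symplectic Gelfand-Tsetlin representation, one could iterate the branching rule \eqref{e:sp32} one variable at a time, peeling off $x_n^{\pm},\dots,x_1^{\pm}$ to generate each pair of interlacing steps $z_{2i-2}\prec z_{2i-1}\prec z_{2i}$ with weight $x_i^{2|z_{2i-1}|-|z_{2i}|-|z_{2i-2}|}$, and finally the single step $z_{2n}\prec z_{2n+1}=\lambda$ carrying the factor $x_{n+1}^{|z_{2n+1}|-|z_{2n}|}$; this reproves the symplectic case en route but is self-contained.
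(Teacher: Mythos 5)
Your proposal matches the paper's own (implicitly sketched) argument exactly: the paper obtains \eqref{e:sp11} by combining the $k=0$ transition formula $sp_{\lambda}(\mathbf{x}^{\pm};z)=\sum_{\mu\subset\lambda}sp_{\mu}(\mathbf{x}^{\pm})z^{|\lambda|-|\mu|}$ with the Gelfand--Tsetlin representation of $sp_{\mu}(\mathbf{x}^{\pm})$ from \cite[Thm.~4.1]{JLW2022}, just as you do. Your bookkeeping is sound, and in fact you make explicit a point the paper glosses over, namely that the one-variable factor $sp_{\lambda/\mu}(z)=s_{\lambda/\mu}(z)$ vanishes unless $\mu\prec\lambda$, which is what cuts the sum down to interlacing partitions and yields the top row of the pattern.
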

From Theorem \ref{th2}, we also have a Gelfand-Tsetlin representation for skew odd symplectic character.
\begin{corollary}For partitions $\mu=(\mu_1,\dots,\mu_l)\subseteq\lambda=(\lambda_1,\dots,\lambda_{l+n})$ and $\mathbf{x}^{\pm}=(x^{\pm}_1,\dots,x_n^{\pm})$, we have
\begin{align*}
sp_{\lambda/\mu}(\mathbf{x}^{\pm};x_{n+1})=\sum_{\mu=z_0\prec z_1\prec\dots\prec z_{2n}\prec z_{2n+1}=\lambda}x^{|z_{2n+1}|-|z_{2n}|}_{n+1}\prod^n_{i=1}x^{2|z_{2i-1}|-|z_{2i}|-|z_{2i-2}|}_i.
\end{align*}
where partitions $z_k=(z_{k,1},\dots,z_{k,l+\frac{\lceil k\rceil}{2}})$ satisfy the symplectic Gelfand-Tsetlin pattern
\begin{align*}
z_{k+1,j}\leq z_{k,j-1}\leq z_{k+1,j-1}~~~~\text{for}~~~~ 2\leq j\leq l+\frac{\lceil k+1\rceil}{2}.
\end{align*}
\end{corollary}


\subsection{Universal symplectic characters}
Koike and Terada have defined the universal symplectic character for the symplectic groups using Weyl's Jacobi-Trudi identity \cite[Thm. 1.3.3]{KT1987}
\begin{align}
sp_\lambda(\mathbf{z})=\det\big{(}h_{\lambda_i-i+j}(\mathbf{z})+\delta_{j>1}h_{\lambda_i-i-j+2}(\mathbf{z})\big{)}^{m}_{ i,j=1}.
\end{align}
Therefore the universal symplectic character is a special case ($n=0$) of our universal symplectic function.

By Theorem \ref{th1} we have the following vertex operator realizations
\begin{align}
&\langle0|\Gamma_+(\mathbf{z})|\lambda^{sp}\rangle=sp_\lambda(\mathbf{z})
\end{align}
for the general partition $\lambda=(\lambda_1,\dots,\lambda_m)$.

%
\subsection{Intermediate symplectic characters}For $\lambda=(\lambda_1,\dots,\lambda_{n+1},\underbrace{0,\dots,0}_{m-1})$, the universal symplectic functions \eqref{e:uni} reduce to intermediate symplectic characters (see \cite[Prop. 2.5]{Ok2021}, \cite[(3.1)]{Kr1995}). The latter were used by Proctor \cite{Pr1991} to define the so-called ``trace-free'' characters of the intermediate symplectic groups.

\section{Universal orthogonal functions}
\subsection{Orthogonal vertex operators}
 We can similarly develop the vertex algebraic approach to the orthogonal Schur functions and their universal generalizations. Define the vertex operators\cite{JN2015,Ba1996, SZ2006}
\begin{align*}
&W(z)=(1-z^2)\exp\left(\sum^\infty_{n=1}\frac{a_{-n}}{n}z^n\right)\exp\left(-\sum^\infty_{n=1}\frac{a_n}{n}(z^{-n}+z^n)\right)=\sum_{n\in \mathbb{Z}}W_nz^{-n},\\
&W^*(z)=\exp\left(-\sum^\infty_{n=1}\frac{a_{-n}}{n}z^n\right)\exp\left(\sum^\infty_{n=1}\frac{a_n}{n}(z^{-n}+z^n)\right)=\sum_{n\in \mathbb{Z}}W^*_nz^n.
\end{align*}
The following commutation relations are easy consequences of the vertex operator calculus \cite{Ba1996,JN2015}:
\begin{equation}
\begin{aligned}\label{e:o15}
&W_iW_j+W_{j+1}W_{i-1}=0,\qquad W^*_iW^*_j+W^*_{j-1}W^*_{i+1}=0,\qquad W_iW^*_j+W^*_{j+1}W_{i+1}=\delta_{i,j},\\
&W_n|0\rangle=W^*_{-n}|0\rangle=0,\quad \text{for}~ n>0,~~~~~W_0|0\rangle=W^*_0|0\rangle=|0\rangle\\
&\langle 0|W_n=-\langle 0|W_{-n-2},\qquad \langle 0|W^*_n=\langle 0|W^*_{-n}.
\end{aligned}
\end{equation}

For general partition $\lambda=(\lambda_1,\dots,\lambda_l)$, introduce
\begin{align*}
|\lambda^{o}\rangle=W_{-\lambda_1}W_{-\lambda_2}\cdots W_{-\lambda_l}|0\rangle, \qquad\langle \lambda^{o}|=\langle 0|W^*_{-\lambda_l}\cdots W^*_{-\lambda_1}.
\end{align*}
From \eqref{e:o15}, one has the following equations for $\mu=(\mu_1,\dots,\mu_l)$ and permutation $\sigma\in S_l$
\begin{align}
&\varepsilon(\sigma)\langle 0|\left(^{W^*_{-\mu_{\sigma(l)}+\sigma(l)-l}}_{\delta_lW^*_{\mu_{\sigma(l)}-\sigma(l)+l}}\right)\cdots \left(^{W^*_{-\mu_{\sigma(i)}+\sigma(i)-i}}_{\delta_iW^*_{\mu_{\sigma(i)}-\sigma(i)+2l-i}}\right)\cdots \left(^{W^*_{-\mu_{\sigma(1)}+\sigma(1)-1}}_{\delta_1W^*_{\mu_{\sigma(1)}-\sigma(1)+2l-1}}\right)=\langle\mu^{o}|,\\
&\varepsilon(\sigma)W_{-\mu_{\sigma(l)}+\sigma(l)-l}\dots W_{\mu_{\sigma(l)}-\sigma(l)+l}|0\rangle=|\mu^{o}\rangle,
\end{align}
where $\delta_i$ denotes $\delta_{\mu_l\neq 0}\delta_{\sigma(i)\neq l}$.

It was shown \cite[Theorem 2.1]{JLW2022} that
\begin{align}\label{e:o12}
\langle \mu^{o}|\lambda^{o}\rangle=\delta_{\lambda\mu}
\end{align}for general partitions $\mu=(\mu_1,\mu_2,\dots,\mu_l)$ and $\lambda=(\lambda_1,\lambda_2,\dots,\lambda_l)$ with zero parts in the tails allowed. Therefore, for general partition $\alpha=(\alpha_1,\dots,\alpha_n)$, we have
\begin{align}\label{e:sp15b}
\langle \alpha^{o}|\sum_{\eta=(\eta_1,\dots,\eta_n)}|\eta^{o}\rangle\langle\eta^{o}|=\langle \alpha^{o}|,~~\sum_{\eta=(\eta_1,\dots,\eta_n)}|\eta^{o}\rangle\langle\eta^{o}||\alpha^{o}\rangle=|\alpha^{o}\rangle.
\end{align}
Here again the operator $\sum_{\eta}|\eta^{o}\rangle\langle\eta^{o}|$ can be viewed as an identity operator on the subspace spanned by $|\alpha^o\rangle$'s
(or $\langle\alpha^{o}|$'s).


\subsection{(Skew) orthogonal Schur functions}For each partition $\lambda=(\lambda_1,\lambda_2,\ldots,\lambda_l)$, $\lambda_l\geq 0$, the (even) orthogonal Schur functions\cite{Kr1995,Mac1995,AK2022,Me2019} is defined by
\begin{align*}
o_\lambda(\mathbf{x}^{\pm})=2\frac{\det(x^{\lambda_j+(l-j)}_i+\delta_{j\neq l}\delta_{\lambda_l\neq 0}x^{-\lambda_j-(l-j)}_i)^l_{i,j=1}}{\det(x^{l-j}_i+x^{-(l-j)}_i)^l_{i,j=1}}.
\end{align*}
In particular, for partition $\lambda=(\lambda_1,\lambda_2,\ldots,\lambda_l)$ with $\lambda_l=0$,
\begin{align}\label{e:o9}
o_\lambda(\mathbf{x}^{\pm})=\frac{\det(x^{\lambda_j+(l-j)}_i+x^{-\lambda_j-(l-j)}_i)^l_{i,j=1}}{\det(x^{l-j}_i+x^{-(l-j)}_i)^l_{i,j=1}}.
\end{align}
The following vertex operator realizations for (skew) orthogonal Schur functions can be found in \cite[Proposition 3.3]{JLW2022}.
\begin{theorem}\label{th8}For general partitions $\nu=(\nu_1,\dots,\nu_{n})$, $\beta=(\beta_1,\dots,\beta_l)$ and
$\alpha=(\alpha_1,\dots,\alpha_{l+n})$,
one has
\begin{align}
&\notag\langle0|\Gamma_+(\mathbf{x}^{\pm})|\nu^{o}\rangle=o_\nu(\mathbf{x}^{\pm})=\det\big{(}h_{\lambda_i-i+j}(\mathbf{x}^{\pm})-h_{\lambda_i-i-j}(\mathbf{x}^{\pm})\big{)}^{n}_{ i,j=1},\\
&\langle\beta^{o}|\Gamma_+(\mathbf{x}^{\pm})|\alpha^{o}\rangle=\det\big{(}h_{\alpha_i-\beta_j-i+j}(\mathbf{x}^{\pm})-\delta_{j>l}h_{\alpha_i-i-j+2l}(\mathbf{x}^{\pm})\big{)}^{l+n}_{ i,j=1}=o_{\alpha/\beta}(\mathbf{x}^{\pm}),
\end{align}
which is zero unless $\beta\subset\alpha$ and
$o_{\alpha/\beta}(\mathbf{x}^{\pm})$ is the skew orthogonal Schur function.
\end{theorem}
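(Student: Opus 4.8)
The plan is to mirror the vertex operator computation used for the symplectic case (Theorems \ref{th6} and \ref{th7}), substituting the orthogonal vertex operators $W,W^*$ and their relations \eqref{e:o15}. First I would establish, via the BCH formula, that $\Gamma_+(\mathbf{x}^{\pm})$ commutes with the full vertex operator $W(z)$ up to a scalar generating function,
\begin{align*}
\Gamma_+(\mathbf{x}^{\pm})W(z)=\Big(\prod_{i=1}^{n}\frac{1}{(1-x_iz)(1-x^{-1}_iz)}\Big)W(z)\Gamma_+(\mathbf{x}^{\pm})=\Big(\sum_{k\geq 0}h_k(\mathbf{x}^{\pm})z^k\Big)W(z)\Gamma_+(\mathbf{x}^{\pm}),
\end{align*}
since $\Gamma_+(\mathbf{x}^{\pm})$ contains only the positive modes $a_n$ and its only nontrivial contraction is with the $\exp(\sum a_{-n}z^n/n)$ factor of $W(z)$. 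Comparing coefficients of $z^{-p}$ gives the mode form $\Gamma_+(\mathbf{x}^{\pm})W_p=\sum_{k\geq 0}h_k(\mathbf{x}^{\pm})W_{p+k}\Gamma_+(\mathbf{x}^{\pm})$. Because the positive modes annihilate the vacuum, $\Gamma_+(\mathbf{x}^{\pm})|0\rangle=|0\rangle$, so sliding $\Gamma_+(\mathbf{x}^{\pm})$ rightward through $|\nu^o\rangle=W_{-\nu_1}\cdots W_{-\nu_n}|0\rangle$ yields
\begin{align*}
\langle0|\Gamma_+(\mathbf{x}^{\pm})|\nu^o\rangle=\sum_{k_1,\dots,k_n\geq 0}h_{k_1}(\mathbf{x}^{\pm})\cdots h_{k_n}(\mathbf{x}^{\pm})\,\langle0|W_{-\nu_1+k_1}\cdots W_{-\nu_n+k_n}|0\rangle.
\end{align*}

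It then remains to evaluate the vacuum matrix element $\langle0|W_{m_1}\cdots W_{m_n}|0\rangle$. I would use the exchange relation $W_iW_j+W_{j+1}W_{i-1}=0$ from \eqref{e:o15} to straighten the product of modes; together with the reordering identity $\varepsilon(\sigma)W_{-\mu_{\sigma(n)}+\sigma(n)-n}\cdots|0\rangle=|\mu^o\rangle$ and the orthonormality $\langle\mu^o|\lambda^o\rangle=\delta_{\lambda\mu}$ of \eqref{e:o12}, this reduces the multi-sum to a signed sum over permutations, i.e.\ a determinant whose entries are the scalars $h_k(\mathbf{x}^{\pm})$. The antisymmetry built into $W_iW_j=-W_{j+1}W_{i-1}$ is exactly the mechanism that assembles the Jacobi--Trudi determinant $\det(h_{\nu_i-i+j}(\mathbf{x}^{\pm})-h_{\nu_i-i-j}(\mathbf{x}^{\pm}))$, while the \emph{subtracted} term is produced by the orthogonal boundary relation $\langle0|W_n=-\langle0|W_{-n-2}$, which reflects a negative mode index back with a sign and a shift by $2$.

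For the skew statement I would repeat the same sliding, now against the dual partition element $\langle\beta^o|=\langle0|W^*_{-\beta_l}\cdots W^*_{-\beta_1}$, reducing $\langle\beta^o|\Gamma_+(\mathbf{x}^{\pm})|\alpha^o\rangle$ to $\sum_{\mathbf{k}}\prod_i h_{k_i}(\mathbf{x}^{\pm})\,\langle\beta^o|W_{-\alpha_1+k_1}\cdots W_{-\alpha_{l+n}+k_{l+n}}|0\rangle$. The mixed matrix element is evaluated by pushing the $W^*_{-\beta_j}$ past the $W$'s with the contraction relation $W_iW^*_j+W^*_{j+1}W_{i+1}=\delta_{i,j}$, which picks out the diagonal pairings and yields the $(l+n)\times(l+n)$ determinant $\det(h_{\alpha_i-\beta_j-i+j}(\mathbf{x}^{\pm})-\delta_{j>l}h_{\alpha_i-i-j+2l}(\mathbf{x}^{\pm}))$; the threshold $j>l$ and the shift $+2l$ track precisely where the $\beta$-block ends and the reflection term switches on. The vanishing unless $\beta\subset\alpha$ is read off from the support of the determinant, since entries with $\alpha_i-\beta_j-i+j<0$ on and below the diagonal force a zero.

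Finally, to identify the result with $o_\nu(\mathbf{x}^{\pm})$ in the bialternant form \eqref{e:o9}, I would alternatively compute the generating matrix element $\langle0|\Gamma_+(\mathbf{x}^{\pm})W(w_1)\cdots W(w_n)|0\rangle$: the $(1-w_i^2)$ prefactors of the $W(w_i)$ together with their mutual contractions assemble into a Weyl-denominator-type product, and extracting the coefficient indexed by $\nu$ reproduces the ratio of determinants in \eqref{e:o9}; matching this against the Jacobi--Trudi form gives the stated equality. I expect the main obstacle to be the bookkeeping of the orthogonal boundary relation $\langle0|W_n=-\langle0|W_{-n-2}$ and the accompanying factors $\delta_i=\delta_{\mu_l\neq0}\delta_{\sigma(i)\neq l}$: unlike the symplectic reflection $\langle0|Y_n=\langle0|Y_{-n}$, here the reflection carries both a sign and an index shift and is moreover suppressed when $\lambda_l=0$, so pinning down the exact shape of the correction term (the minus sign, the shift $2l$, and the precise $j>l$ cutoff) rather than a spurious variant is the delicate point.
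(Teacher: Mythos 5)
Your proposal is correct and is essentially the intended argument: the paper does not prove Theorem \ref{th8} itself but imports it from \cite[Proposition 3.3]{JLW2022}, whose proof is exactly your computation --- commute $\Gamma_+(\mathbf{x}^{\pm})$ through $W(z)$ by BCH to get $\Gamma_+(\mathbf{x}^{\pm})W_p=\sum_{k\geq 0}h_k(\mathbf{x}^{\pm})W_{p+k}\Gamma_+(\mathbf{x}^{\pm})$, then evaluate the remaining matrix elements by the straightening and contraction relations \eqref{e:o15} together with the orthogonality \eqref{e:o12}, with the reflection $\langle 0|W_n=-\langle 0|W_{-n-2}$ generating precisely the subtracted $\delta_{j>l}h_{\alpha_i-i-j+2l}(\mathbf{x}^{\pm})$ terms, and the identification with the bialternant \eqref{e:o9} done via the generating functional $\langle 0|\Gamma_+(\mathbf{x}^{\pm})W(w_1)\cdots W(w_n)|0\rangle$ in Baker's style. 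You also correctly flag the one genuinely delicate point (the sign and index shift in the reflection, and its suppression when the last part is zero, encoded in the factors $\delta_{\mu_l\neq 0}\delta_{\sigma(i)\neq l}$), which is where the cited proof's bookkeeping lies.
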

In the same work \cite{JLW2022}, the general branching rule for orthogonal Schur functions was shown:
\begin{align}\label{e:br2}
o_\nu(x^\pm_1,\dots,x^\pm_n)=\sum_{\gamma=(\gamma_1,\dots,\gamma_{n-k})\subset\nu}o_\gamma(x^\pm_1,\dots,x^\pm_{n-k})o_{\nu/\gamma}(x^\pm_{n-k+1},\dots,x^\pm_n).
\end{align}
\subsection{(Skew) universal orthogonal functions} Similar to the universal symplectic functions, we introduce the {\it universal orthogonal functions} (invariants of $(S_n\ltimes (\mathbb{Z}_2)^n)\times S_m$)
\begin{align}\label{e:uni1}
o_\lambda(\mathbf{x}^{\pm};\mathbf{z})=\det\big{(}h_{\lambda_i-i+j}(\mathbf{x}^{\pm};\mathbf{z})-h_{\lambda_i-i-j}(\mathbf{x}^{\pm};\mathbf{z})\big{)}^{n+m}_{ i,j=1}.
 \end{align}
 Using the same method in proving Theorem \ref{th1}, we obtain the following vertex operator realization.
 \begin{theorem}\label{th9}
For general partition $\lambda=(\lambda_1,\dots,\lambda_{n+m})$.
one has
\begin{align}\label{e:o14}
\langle0|\Gamma_+(\mathbf{x}^{\pm};\mathbf{z})|\lambda^{o}\rangle=o_{\lambda}(\mathbf{x}^{\pm};\mathbf{z}).
\end{align}
\end{theorem}

\begin{theorem}
For general partitions $\beta=(\beta_1,\dots,\beta_l)$ and $\alpha=(\alpha_1,\dots,\alpha_{l+n})$, one has
\begin{align}
\langle\beta^{o}|\Gamma_+(\mathbf{x}^{\pm};\mathbf{z})|\alpha^{o}\rangle=&\det\big{(}h_{\alpha_i-\beta_j-i+j}(\mathbf{x}^{\pm};\mathbf{z})--\delta_{j>l}h_{\alpha_i-i-j+2l}(\mathbf{x}^{\pm};\mathbf{z})\big{)}^{l+n}_{ i,j=1}\\
=&o_{\alpha/\beta}(\mathbf{x}^{\pm};\mathbf{z}),
\end{align}
which is zero unless $\beta\subset\alpha$ and will be called 
the skew universal orthogonal function associated to $\alpha/\beta$.
\end{theorem}
We thus have the following {\it general branching rule} for universal orthogonal function
\begin{align}
o_\nu(x^\pm_1,\dots,x^\pm_n;\mathbf{z})=\sum_{\gamma=(\gamma_1,\dots,\gamma_{n-k})\subset\nu}o_\gamma(x^\pm_1,\dots,x^\pm_{n-k})o_{\nu/\gamma}(x^\pm_{n-k+1},\dots,x^\pm_n;\mathbf{z}).
\end{align}

\subsection{Odd orthogonal Schur functions}Odd orthogonal Schur functions $so_\lambda(\mathbf{x}^{\pm})$ can be used to define irreducible
characters of $SO_{2n+1}$, and they have the bialternant form (cf. \cite{Wey1946})
\begin{align}
so_\lambda(\mathbf{x}^{\pm})=\frac{\det(x^{\lambda_j+n-j+\frac{1}{2}}_i-x^{-(\lambda_j+n-j+\frac{1}{2})}_i)^{n}_{i,j=1}}{\det(x^{n-j+\frac{1}{2}}_i-x^{-(n-j+\frac{1}{2})}_i)^{n}_{i,j=1}}.
\end{align}

We now consider the vertex operator realization for $so_\lambda(\mathbf{x}^{\pm})$, which turns out to be a special universal orthogonal function.
The following is proved by a similar method as in Theorem \ref{e:sp30}, nevertheless we will show its
special case below (for $z=\pm 1$) in Corollary \ref{c:orth} and Corollary \ref{c:oddorth}.
\begin{theorem}\label{th30} For any partition $\lambda=(\lambda_1,\dots,\lambda_n,0)$, one has that
\begin{align}\label{e:oo4}
\langle0|\Gamma_+(\mathbf{x}^{\pm};z)|\lambda^{o}\rangle=o_\lambda(\mathbf{x}^{\pm};z)=\frac{\det(A_{ij})^{n+1}_{i,j=1}}{\det(B_{ij})^{n+1}_{i,j=1}},
\end{align}
where
\begin{equation}
A_{ij}=\left\{\begin{aligned}
&x^{\lambda_j+(n-j+1)}_i+x^{-\lambda_j-(n-j+1)}_i-\delta_{\lambda_j>0}z^{-1}(x^{\lambda_j+(n-j)}_i+x^{-\lambda_j-(n-j)}_i)~~&~~1\leq i\leq n,\\
&\delta_{\lambda_j>0}(z^{\lambda_j+n-j+1}-z^{\lambda_j+n-j-1})+\delta_{\lambda_j,0}(z^{n+1-j}+z^{-n-i+j})~&~~i=n+1,
\end{aligned}
\right.
\end{equation}
and $B_{ij}=x^{n-j+1}_i+x^{-(n-j+1)}_i$ for $1\leq i\leq n$, $B_{n+1,j}=z^{n-j+1}+z^{-(n-j+1)}$.
\end{theorem}

Similar to Theorem \ref{th10}, one can also express this special orthogonal Schur functions in terms of (even) orthogonal Schur functions as follows.
\begin{align}
o_\lambda(\mathbf{x}^{\pm};z)=\sum_{\substack{\varepsilon_i\in\{0,1\}\\1\leq i\leq n}}(-z^{-1})^{|\varepsilon|}o_{\lambda_1-\varepsilon_1,\dots,\lambda_{n}-\varepsilon_{n}}(\mathbf{\underline{x}}^{\pm}),
\end{align}
where $\mathbf{\underline{x}}^{\pm}=(x^\pm_1,\dots,x^\pm_n,z^{\pm})$ and $\lambda=(\lambda_1,\dots,\lambda_n,0)$.
When $z$ takes some special values, $o_\lambda(\mathbf{x}^{\pm};z)$ have representation-theoretic interpretations.
\begin{corollary}\label{c:orth} Let $\lambda$ be a partition with $n$ parts. $o_\lambda(\mathbf{x}^{\pm};1)$ is given by
\begin{align}\label{e:oo2}
o_\lambda(\mathbf{x}^{\pm};1)=\frac{\det(x^{\lambda_j+n-j+\frac{1}{2}}_i-x^{-(\lambda_j+n-j+\frac{1}{2})}_i)^{n}_{i,j=1}}{\det(x^{n-j+\frac{1}{2}}_i-x^{-(n-j+\frac{1}{2})}_i)^{n}_{i,j=1}}=so_\lambda(\mathbf{x}^{\pm}).
\end{align}
\end{corollary}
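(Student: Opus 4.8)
The plan is to specialize the bideterminant \eqref{e:oo4} of the preceding theorem at $z=1$ and to collapse the resulting $(n+1)\times(n+1)$ quotient of determinants onto the $n\times n$ bialternant defining $so_\lambda(\mathbf{x}^{\pm})$. Since $\lambda=(\lambda_1,\dots,\lambda_n,0)$ has exactly $n$ positive parts, we have $\lambda_j>0$ for $1\le j\le n$ and only $\lambda_{n+1}=0$, so at $z=1$ the last row of the numerator matrix collapses to $A_{n+1,j}=1^{\lambda_j+n-j+1}-1^{\lambda_j+n-j-1}=0$ for $1\le j\le n$ and $A_{n+1,n+1}=2$, while every entry of the last row of the denominator matrix becomes $B_{n+1,j}=1+1=2$.

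The key algebraic input is the elementary identity
\begin{align*}
x^{a+1}+x^{-(a+1)}-x^{a}-x^{-a}=(x^{1/2}-x^{-1/2})\bigl(x^{a+\frac{1}{2}}-x^{-(a+\frac{1}{2})}\bigr),
\end{align*}
verified by expanding the right-hand side. Applying it with $a=\lambda_j+(n-j)$ to the top-left block of the numerator (where the $z^{-1}$-term is present because $\lambda_j>0$) gives $A_{ij}=(x_i^{1/2}-x_i^{-1/2})\bigl(x_i^{\lambda_j+n-j+\frac{1}{2}}-x_i^{-(\lambda_j+n-j+\frac{1}{2})}\bigr)$ for $1\le i,j\le n$. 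Expanding $\det(A_{ij})_{i,j=1}^{n+1}$ along its last row $(0,\dots,0,2)$ and pulling the factor $x_i^{1/2}-x_i^{-1/2}$ out of each of the first $n$ rows yields
\begin{align*}
\det(A_{ij})_{i,j=1}^{n+1}=2\prod_{i=1}^{n}\bigl(x_i^{1/2}-x_i^{-1/2}\bigr)\,\det\bigl(x_i^{\lambda_j+n-j+\frac{1}{2}}-x_i^{-(\lambda_j+n-j+\frac{1}{2})}\bigr)_{i,j=1}^{n}.
\end{align*}

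For the denominator the last row is not yet in echelon form, so I would first subtract column $j+1$ from column $j$ for $1\le j\le n$ (equivalently, right-multiply by a unitriangular matrix, which preserves the determinant), leaving column $n+1$ fixed. This annihilates the first $n$ entries of the last row while keeping $B_{n+1,n+1}=2$, and turns the $(i,j)$-entry with $1\le i\le n$ into $x_i^{n-j+1}+x_i^{-(n-j+1)}-x_i^{n-j}-x_i^{-(n-j)}$, to which the same identity applies with $a=n-j$. Expanding along the resulting last row and factoring each row exactly as before gives
\begin{align*}
\det(B_{ij})_{i,j=1}^{n+1}=2\prod_{i=1}^{n}\bigl(x_i^{1/2}-x_i^{-1/2}\bigr)\,\det\bigl(x_i^{n-j+\frac{1}{2}}-x_i^{-(n-j+\frac{1}{2})}\bigr)_{i,j=1}^{n}.
\end{align*}
Forming the quotient, the common factors $2\prod_{i}(x_i^{1/2}-x_i^{-1/2})$ cancel and leave exactly the bialternant defining $so_\lambda(\mathbf{x}^{\pm})$, which establishes \eqref{e:oo2}. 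The one step demanding care is the reduction of $\det(B_{ij})$: its last row at $z=1$ is constant rather than a single nonzero entry, so the column operations are needed to expose the lone surviving entry before expanding; once that is arranged, the matching half-integer exponents and the shared row factors make the final cancellation automatic.
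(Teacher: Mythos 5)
Your proposal is correct and follows essentially the same route as the paper's proof: specialize \eqref{e:oo4} at $z=1$, factor the top $n\times n$ block of the numerator via the identity $x^{a+1}+x^{-(a+1)}-x^{a}-x^{-a}=(x^{\frac{1}{2}}-x^{-\frac{1}{2}})\bigl(x^{a+\frac{1}{2}}-x^{-(a+\frac{1}{2})}\bigr)$, clear the constant last row of the denominator by adjacent column subtractions, expand both determinants along the last row, and cancel the common factor $2\prod_{i=1}^{n}\bigl(x_i^{\frac{1}{2}}-x_i^{-\frac{1}{2}}\bigr)$. Your framing of the column operations as right-multiplication by a unitriangular matrix is just a precise restatement of the paper's ``subtract the $i$th column from the $(i-1)$st column'' step.
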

\begin{proof}
When $z=1$, \eqref{e:oo4} becomes
\begin{align}
o_\lambda(\mathbf{x}^{\pm};1)=\frac{\det(A_{ij})^{n+1}_{i,j=1}}{\det(B_{ij})^{n+1}_{i,j=1}}
\end{align}
with
\begin{equation}
A_{ij}=\left\{\begin{aligned}
&(x^{\lambda_j+(n-j+\frac{1}{2})}_i-x^{-\lambda_j-(n-j+\frac{1}{2})}_i)(x^{\frac{1}{2}}_i-x^{-\frac{1}{2}}_i)~~&~~1\leq i,j\leq n,\\
&0~~&~~i=n+1\& 1\leq j\leq n,\\
&2~~&~~i=n+1\& j=n+1
\end{aligned}
\right.
\end{equation}
and $B_{ij}=x^{n-j+1}_i+x^{-(n-j+1)}_i$ for $1\leq i\leq n$, $B_{n+1,j}=2$.
Using general properties of determinants, we have
\begin{align*}
\det(A_{ij})^{n+1}_{i,j=1}=&2\prod^n_{i=1}(x^{\frac{1}{2}}_i-x^{-\frac{1}{2}}_i)\det(x^{\lambda_j+(n-j+\frac{1}{2})}_i-x^{-\lambda_j-(n-j+\frac{1}{2})}_i)^{n}_{i,j=1}.
\end{align*}

Subtract the $i$th column from the ($i-1$)st column for $i = n+1, n, . . . , 2$, we see that
\begin{align*}
\det(B_{ij})^{n+1}_{i,j=1}=&2\det(x^{n-j+1}_i+x^{-(n-j+1)}_i-x^{n-j}_i-x^{-(n-j)}_i)^{n}_{i,j=1}\\
=&2(x^{\frac{1}{2}}_i-x^{-\frac{1}{2}}_i)\det(x^{n-j+\frac{1}{2}}_i-x^{-(n-j+\frac{1}{2})}_i)^{n}_{i,j=1}.
\end{align*}
Then the ratio of $\det(A_{ij})^{n+1}_{i,j=1}$ by $\det(B_{ij})^{n+1}_{i,j=1}$ gives \eqref{e:oo2}.
\end{proof}
\begin{remark} As we have seen that
the odd orthogonal Schur function is a special case of universal orthogonal functions. Using \eqref{e:oo4}, we have the  vertex operator realization for odd orthogonal Schur function by $\langle0|\Gamma_+(\mathbf{x}^{\pm};1)|\lambda^{o}\rangle=so_\lambda(\mathbf{x}^{\pm})$ with $l(\lambda)=n$. In the forthcoming paper, we will offer another vertex operator representation of odd orthogonal Schur functions.
\end{remark}
\begin{corollary}\label{c:oddorth} Let $\lambda=(\lambda_1,\dots,\lambda_n,0)$ be a partition with $n$ parts. $o_\lambda(\mathbf{x}^{\pm};-1)$ is given by
\begin{align}\label{e:oo3}
o_\lambda(\mathbf{x}^{\pm};-1)=\frac{\det(x^{\lambda_j+n-j+\frac{1}{2}}_i+x^{-(\lambda_j+n-j+\frac{1}{2})}_i)^{n}_{i,j=1}}{\det(x^{n-j+\frac{1}{2}}_i+x^{-(n-j+\frac{1}{2})}_i)^{n}_{i,j=1}}
\end{align}
\end{corollary}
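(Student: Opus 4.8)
The plan is to specialize the general formula \eqref{e:oo4} to $z=-1$ and reduce the resulting $(n+1)\times(n+1)$ bialternant ratio to the $n\times n$ one, exactly as in Corollary~\ref{c:orth} but tracking the sign changes forced by $z=-1$. First I would record how $z=-1$ affects the entries. Since $-z^{-1}=1$, for $1\le i,j\le n$ (where $\lambda_j>0$) the entry becomes $A_{ij}=x_i^{\lambda_j+n-j+1}+x_i^{-\lambda_j-(n-j+1)}+x_i^{\lambda_j+n-j}+x_i^{-\lambda_j-(n-j)}$, which factors as $(x_i^{1/2}+x_i^{-1/2})(x_i^{\lambda_j+n-j+1/2}+x_i^{-(\lambda_j+n-j+1/2)})$. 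For the bottom row the parity identity $(-1)^{k+2}=(-1)^k$ gives $A_{n+1,j}=(-1)^{\lambda_j+n-j+1}-(-1)^{\lambda_j+n-j-1}=0$ for $1\le j\le n$, while $A_{i,n+1}=2$ for $1\le i\le n$ and $A_{n+1,n+1}=2$ because $\lambda_{n+1}=0$.

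With the last row reduced to $(0,\dots,0,2)$, expanding the numerator along it removes the final column of $2$'s and gives $\det(A_{ij})_{i,j=1}^{n+1}=2\det\!\big((x_i^{1/2}+x_i^{-1/2})(x_i^{\lambda_j+n-j+1/2}+x_i^{-(\lambda_j+n-j+1/2)})\big)_{i,j=1}^{n}$; pulling $x_i^{1/2}+x_i^{-1/2}$ out of each row then yields $\det(A_{ij})_{i,j=1}^{n+1}=2\prod_{i=1}^{n}(x_i^{1/2}+x_i^{-1/2})\det\!\big(x_i^{\lambda_j+n-j+1/2}+x_i^{-(\lambda_j+n-j+1/2)}\big)_{i,j=1}^{n}$.

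The denominator is the step I expect to be the main obstacle, because at $z=-1$ the last row is $B_{n+1,j}=(-1)^{n-j+1}+(-1)^{-(n-j+1)}=2(-1)^{n-j+1}$, which \emph{alternates} in sign rather than being the constant $2$ of the case $z=1$. The remedy is to replace the adjacent-column subtraction used for $z=1$ by an \emph{addition}: replacing column $j$ by column $j$ plus column $j+1$ for $1\le j\le n$. In the bottom row this produces $2(-1)^{n-j+2}+2(-1)^{n-j+1}=0$, killing the alternating entries, while in rows $1\le i\le n$ it produces $(x_i^{n-j+1}+x_i^{-(n-j+1)})+(x_i^{n-j}+x_i^{-(n-j)})=(x_i^{1/2}+x_i^{-1/2})(x_i^{n-j+1/2}+x_i^{-(n-j+1/2)})$, precisely the factorization needed. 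The untouched column $n+1$ keeps $B_{n+1,n+1}=2$, so expanding along the last row and extracting $x_i^{1/2}+x_i^{-1/2}$ from each row gives $\det(B_{ij})_{i,j=1}^{n+1}=2\prod_{i=1}^{n}(x_i^{1/2}+x_i^{-1/2})\det\!\big(x_i^{n-j+1/2}+x_i^{-(n-j+1/2)}\big)_{i,j=1}^{n}$.

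Finally I would form the quotient $\det(A_{ij})/\det(B_{ij})$: the common prefactor $2\prod_{i=1}^{n}(x_i^{1/2}+x_i^{-1/2})$ cancels, leaving exactly the right-hand side of \eqref{e:oo3}. The only genuinely new ingredient compared with Corollary~\ref{c:orth} is that $z=-1$ turns every difference $x^{1/2}-x^{-1/2}$ into a sum $x^{1/2}+x^{-1/2}$ and the column subtraction into a column addition; once that sign bookkeeping is handled the reduction is identical.
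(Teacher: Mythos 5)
Your proof is correct and takes essentially the same route as the paper's: specialize \eqref{e:oo4} at $z=-1$, factor each numerator entry as $(x_i^{1/2}+x_i^{-1/2})\bigl(x_i^{\lambda_j+n-j+\frac{1}{2}}+x_i^{-(\lambda_j+n-j+\frac{1}{2})}\bigr)$ and expand along the last row $(0,\dots,0,2)$, then eliminate the alternating bottom row of $B$ by adjacent-column \emph{additions} and factor again. Your operation ``replace column $j$ by column $j$ plus column $j+1$ for $1\le j\le n$'' is precisely the paper's ``add the $i$th column to the $(i-1)$st column for $i=n+1,n,\dots,2$,'' so the two arguments coincide step for step, with your write-up merely making explicit the entries $A_{i,n+1}=2$ and the final row expansion that the paper leaves implicit.
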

\begin{proof}
When $z=-1$, \eqref{e:oo4} becomes
\begin{align}
o_\lambda(\mathbf{x}^{\pm};-1)=\frac{\det(A_{ij})^{n+1}_{i,j=1}}{\det(B_{ij})^{n+1}_{i,j=1}}
\end{align}
with
\begin{equation}
A_{ij}=\left\{\begin{aligned}
&(x^{\lambda_j+(n-j+\frac{1}{2})}_i+x^{-\lambda_j-(n-j+\frac{1}{2})}_i)(x^{\frac{1}{2}}_i+x^{-\frac{1}{2}}_i)~~&~~1\leq i,j\leq n,\\
&0~~&~~i=n+1\& 1\leq j\leq n,\\
&2~~&~~i=n+1\& j=n+1
\end{aligned}
\right.
\end{equation}
and $B_{ij}=x^{n-j+1}_i+x^{-(n-j+1)}_i$ for $1\leq i\leq n$, $B_{n+1,j}=2(-1)^{n+1-j}$.
Thus
\begin{align*}
\det(A_{ij})^{n+1}_{i,j=1}=&2\prod^n_{i=1}(x^{\frac{1}{2}}_i+x^{-\frac{1}{2}}_i)\det(x^{\lambda_j+(n-j+\frac{1}{2})}_i+x^{-\lambda_j-(n-j+\frac{1}{2})}_i)^{n}_{i,j=1}.
\end{align*}
By adding the $i$th column to the ($i-1$)st column for $i = n+1, n, . . . , 2$,
\begin{align*}
\det(B_{ij})^{n+1}_{i,j=1}=&2\det(x^{n-j+1}_i+x^{-(n-j+1)}_i+x^{n-j}_i+x^{-(n-j)}_i)^{n}_{i,j=1}\\
=&2(x^{\frac{1}{2}}_i+x^{-\frac{1}{2}}_i)\det(x^{n-j+\frac{1}{2}}_i+x^{-(n-j+\frac{1}{2})}_i)^{n}_{i,j=1}.
\end{align*}
Then the quotient $\frac{\det(A_{ij})^{n+1}_{i,j=1}}{\det(B_{ij})^{n+1}_{i,j=1}}$ equals to \eqref{e:oo3}.
\end{proof}
\begin{remark}
$o_\lambda(\mathbf{x}^{\pm};-1)$ is the usual odd orthogonal character evaluated as the ``negative'' part of the orthogonal group\cite[(3.6)]{Kr1995}.
\end{remark}
\subsection{Universal orthogonal characters}In \cite[Theorem 1.3.2]{KT1987}, Koike and Terada introduced the universal orthogonal character $o_\lambda(\mathbf{z})$ of $SO_{2m}$ in the ring of symmetric functions, and they gave the determinantal expression
\begin{align}
o_\lambda(\mathbf{z})=\det\big{(}h_{\lambda_i-i+j}(\mathbf{z})-h_{\lambda_i-i-j}(\mathbf{z})\big{)}^{m}_{ i,j=1}.
\end{align}

 Clearly this is a special case of the universal orthogonal function \eqref{e:uni1} with $n=0$. Consequently we get vertex operator realizations for universal orthogonal characters and skew universal orthogonal characters by using Theorem \ref{th9}. We record this special case as follows.
\begin{theorem}
For general partition $\nu=(\nu_1,\dots,\nu_{m})$,
one has
\begin{align}
\langle0|\Gamma_+(\mathbf{z})|\nu^{o}\rangle=o_{\nu}(\mathbf{z}).
\end{align}
\end{theorem}

\subsection{Intermediate orthogonal characters}
In \cite[(3.2)]{Kr1995}, Krattenthaler introduced  the intermediate orthogonal character
\begin{align}
\det\big{(}h^{\prime}_{\lambda_i-i+j}(\mathbf{x}^{\pm};\mathbf{z})+\delta_{j>1}h^{\prime}_{\lambda_i-i-j+2}(\mathbf{x}^{\pm};\mathbf{z})\big{)}^n_{i,j=1}
\end{align}
for $\lambda=(\lambda_1,\dots,\lambda_n)$ and $h^{\prime}_k(\mathbf{x}^{\pm};\mathbf{z})=h_k(\mathbf{x}^{\pm};\mathbf{z})-h_{k-2}(\mathbf{x}^{\pm};\mathbf{z})$. In fact, for $\lambda=(\lambda_1,\dots,\lambda_n,\underbrace{0,\dots,0}_m)$, the universal orthogonal function \eqref{e:uni1} reduces to
\begin{align}\label{e:o13}
o_\lambda(\mathbf{x}^{\pm};\mathbf{z})=\det\big{(}h_{\lambda_i-i+j}(\mathbf{x}^{\pm};\mathbf{z})-h_{\lambda_i-i-j}(\mathbf{x}^{\pm};\mathbf{z})\big{)}^{n}_{ i,j=1}.
\end{align}
Subtracting the $i$th column from the ($i+2$)st column for $i = n-2, n-3, . . . , 1$, \eqref{e:o13} becomes
\begin{align}
o_\lambda(\mathbf{x}^{\pm};\mathbf{z})=\det\big{(}h^{\prime}_{\lambda_i-i+j}(\mathbf{x}^{\pm};\mathbf{z})+\delta_{j>1}h^{\prime}_{\lambda_i-i-j+2}(\mathbf{x}^{\pm};\mathbf{z})\big{)}^n_{i,j=1},
\end{align}
therefore the intermediate orthogonal character is also a special case of the universal orthogonal function \eqref{e:uni1}.

\section{Interpolating Schur functions}
Bisi and Zygouras \cite{BZ2022} have introduced $CB$-interpolating Schur functions $s^{CB}_\lambda(x;\beta)$ that interpolate between characters of
types $C$ and $B$, and $DB$-interpolating Schur functions $s^{DB}_\lambda(x;\alpha)$ that interpolate characters of
types $D$ and $B$. We now discuss their vertex operator realizations and applications\footnote{Our version of $DB$-interpolating Schur functions $s^{DB}_\lambda(x;\beta)$ are in fact different from those of \cite{BZ2022} but with favorite properties in
view of vertex realizations (see Remark \ref{DBr1}).}.

\subsection{$CB$-interpolating Schur functions $s^{CB}_\lambda(x;\beta)$} The specializations $\beta=0, 1$ of $s^{CB}_\lambda(x;\beta)$ are respectively the symplectic Schur functions $sp_\lambda(\mathbf{x}^{\pm})$ and orthogonal Schur functions $so_\lambda(\mathbf{x}^{\pm})$. Let
\begin{align*}
\overline{\Gamma}_+(\mathbf{x}^{\pm};\beta)=\Gamma_+(\mathbf{x}^{\pm})\exp\left(-\sum^\infty_{n=1}\frac{a_n}{n}(-\beta)^n\right),
\end{align*}
where $\mathbf{x}^{\pm}=(x^{\pm}_1,\dots,x_n^{\pm})$.
\begin{theorem}\label{th20}For a partition $\lambda=(\lambda_1,\dots,\lambda_n)$,
\begin{eqnarray}
\label{e:CB4}\langle0|\overline{\Gamma}_+(\mathbf{x}^{\pm};\beta)|\lambda^{sp}\rangle&=&\sum_{\substack{\varepsilon_i\in\{0,1\}\\1\leq i\leq n}}\beta^{|\varepsilon|}sp_{\lambda_1-\varepsilon_1,\dots,\lambda_{n}-\varepsilon_{n}}(\mathbf{x}^{\pm})\\
\label{e:CB5}&=&\frac{\det(x^{\lambda_j+(n-j+1)}_i-x^{-\lambda_j-(n-j+1)}_i+\beta(x^{\lambda_j+(n-j)}_i-x^{-\lambda_j-(n-j)}_i))^{n}_{i,j=1}}{\det(x^{n-j+1}_i-x^{-(n-j+1)}_i)^{n}_{i,j=1}}
\end{eqnarray}
\end{theorem}
\begin{proof} Using the equations \eqref{e:CB1}--\eqref{e:CB3} in the proof of Theorem \ref{th3}, we have that
\begin{align}
\notag&\langle0|\overline{\Gamma}_+(\mathbf{x}^{\pm};\beta)|\lambda^{sp}\rangle\\
\notag=~&\sum_{\nu=(\nu_1,\dots,\nu_{n})}\langle\nu^{sp}|sp_\nu(\mathbf{x}^{\pm})(Y_{-\lambda_1}+\beta Y_{-\lambda_1+1})\cdots (Y_{-\lambda_{n}}+\beta Y_{-\lambda_{n}+1})|0\rangle\\
\label{e:CB8}=~&\sum_{\substack{\varepsilon_i\in\{0,1\}\\1\leq i\leq n}}\beta^{|\varepsilon|}sp_{\lambda_1-\varepsilon_1,\dots,\lambda_{n}-\varepsilon_{n}}(\mathbf{x}^{\pm})\\
\notag=~&\frac{\det(x^{\lambda_j+(n-j+1)}_i-x^{-\lambda_j-(n-j+1)}_i+\beta(x^{\lambda_j+(n-j)}_i-x^{-\lambda_j-(n-j)}_i))^{n}_{i,j=1}}{\det(x^{n-j+1}_i-x^{-(n-j+1)}_i)^{n}_{i,j=1}}.
\end{align}
\end{proof}
\begin{remark}\label{CBre1}
It is easy to see that $\langle0|\overline{\Gamma}_+(\mathbf{x}^{\pm};0)|\lambda^{sp}\rangle=sp_\lambda(\mathbf{x}^{\pm})$ and $\langle0|\overline{\Gamma}_+(\mathbf{x}^{\pm};1)|\lambda^{sp}\rangle=so_\lambda(\mathbf{x}^{\pm})$ by the properties of determinant. Theorem \ref{th20}
 gives a vertex operator realization for the $CB$-interpolating Schur functions $s^{CB}_\lambda(x;\beta)$ defined in \cite[(1.1),(3.8)]{BZ2022}.
\end{remark}
Using the method of proving Theorem \ref{th1}, we also obtain the Jacobi--Trudi formula for $CB$-interpolating Schur functions.
\begin{corollary} Let $h_n(x;\beta)$ be the generalized homogeneous symmetric functions defined by $(1+\beta w)\prod^n_{i=1}\frac{1}{(1-x_iw)(1-x^{-1}_iw)}=\sum_{i\in \mathbb{Z}}h_i(x;\beta)w^i$, then
\begin{align}\label{e:CB7}
s^{CB}_\lambda(x;\beta)=\det\left(h_{\lambda_i-i+j}(x;\beta)+\delta_{j>1}h_{\lambda_i-i-j+2}(x;\beta)\right)^n_{ i,j=1}.
\end{align}
\end{corollary}

\begin{remark}\label{CBre2}
In fact, the determinantal identity \eqref{e:CB7} of $s^{CB}_\lambda(x;\beta)$ coincides with the Jacobi--Trudi identity for the orthosymplectic Schur function $spo_\lambda(x/\beta)$\cite[Cor 4.3]{SV2016}.
\end{remark}
We now offer a short proof for the Brent-Krattenthaler-Warnaar-type identity \eqref{BKW}.
\begin{corollary}(\cite[Thm 4.7]{Kum2024})
Let $m$ and $n$ be positive integers with $m\leq n$, $u$ a nonnegative integer, and $x=(x_1,\dots,x_n)$ and $y=(y_1,\dots,y_m)$. Then
\begin{equation}\begin{aligned}
     &\sum_{\lambda} \beta^u spo_{(u^{n-m},\lambda)}(x/\beta) spo_{\lambda}(y/\beta) \\
     &=\sum_{j=1}^{m+n+1} \beta^{u+m+n+1-j} sp_{\big(u^{j-1},(u-1)^{m+n+1-j}\big)}(\mathbf{x}^{\pm},\mathbf{y}^{\pm}),
     \end{aligned}
\end{equation}
where $\lambda$ runs over partitions of length at most $m$ such that $\lambda_1\leq u$.
\end{corollary}
\begin{proof} It follows from Theorem \ref{th20} and Remark \ref{CBre1} that
\begin{align}
\notag s^{CB}_{u^{m+n}}(x,y;\beta)=~&\langle0|\overline{\Gamma}_+(\mathbf{x}^{\pm},\mathbf{y}^{\pm};\beta)|(u^{m+n})^{sp}\rangle\\
\notag=~&\sum_{\substack{\varepsilon_i\in\{0,1\}\\1\leq i\leq m+n}}\beta^{|\varepsilon|}sp_{u-\varepsilon_1,\dots,u-\varepsilon_{m+n}}(\mathbf{x}^{\pm},\mathbf{y}^{\pm})\\
\label{e:CB9}=~&\sum_{j=1}^{m+n+1} \beta^{m+n+1-j} sp_{\big(u^{j-1},(u-1)^{m+n+1-j}\big)}(\mathbf{x}^{\pm},\mathbf{y}^{\pm}),
\end{align}
where the third equations is due to the fact $sp_{u-\varepsilon_1,\dots,u-\varepsilon_{m+n}}(\mathbf{x}^{\pm},\mathbf{y}^{\pm})=0$ for $\varepsilon_i=1$ and $\varepsilon_{i+1}=0$. Combining with identity \eqref{e:CB6}
\begin{align}
s^{CB}_{u^{m+n}}(x,y;\beta)=\sum_{\lambda_1\leq u}s^{CB}_{(u^{n-m},\lambda)}(x;\beta)s^{CB}_{\lambda}(y;\beta)
\end{align}
and Remark \ref{CBre2}, we have
\begin{align}\label{e:CB10}
s^{CB}_{u^{m+n}}(x,y;\beta)=\sum_{\lambda_1\leq u}spo_{(u^{n-m},\lambda)}(x/\beta) spo_{\lambda}(y/\beta).
\end{align}
In view of \eqref{e:CB9} and \eqref{e:CB10}, we can finish the proof.
\end{proof}

\subsection{$DB$-interpolating Schur functions $s^{DB}_\lambda(x;\beta)$} We now introduce a family of symmetric functions $s^{DB}_\lambda(x;\beta)$
that interpolate between the even orthogonal Schur functions and odd orthogonal Schur functions:
Namely, when $\beta=0$ they are the orthogonal Schur functions, while
$s^{DB}_\lambda(x;\beta)|_{\beta=1}$ are the odd orthogonal Schur functions.

We proceed by letting $\Gamma_+(\mathbf{x}^{\pm};\beta)=\Gamma_+(\beta)\prod^n_{i=1}\Gamma_+(x_i)\Gamma_+(x^{-1}_i)$. From Theorem \ref{th30} and Corollary \ref{c:orth}, we directly get the following results for partition $\lambda=(\lambda_1,\dots,\lambda_n)$.
\begin{align*}
s^{DB}_\lambda(x;\beta)&= \langle0|\Gamma_+(\mathbf{x}^{\pm};\beta)|\lambda^{o}\rangle\\
&= \sum_{\substack{k_i\geq 0\\1\leq i\leq n}}\beta^{k_1+\cdots+k_n}o_{\lambda_1-k_1,\dots,\lambda_{n}-k_{n}}(\mathbf{x}^{\pm})\\
&= \sum_{\mu\prec\lambda}\beta^{|\lambda-\mu|}o_{\mu}(\mathbf{x}^{\pm})\\
&= \det\big{(}h_{\lambda_i-i+j}(x;\beta)-h_{\lambda_i-i-j}(x;\beta)\big{)}^n_{ i,j=1},
\end{align*}
where the third equation can be easily proved similar to \cite[Proposition 2.4]{JL2022}.
\begin{remark}\label{DBr1} We note that $s^{DB}_\lambda(x;\beta)$ slightly differs from $s^{DB}_\lambda(x;\alpha)$ introduced by Bisi and Zygouras \cite{BZ2022}, while
\begin{align}
s^{DB}_\lambda(x;\alpha)=\sum_{\mu_\varepsilon\prec\lambda}\alpha^{\sum^{n-1}_{i=1}(\lambda_i-\mu_i)+\lambda_n-\varepsilon\mu_n}o_{\mu_\varepsilon}(\mathbf{x}^{\pm}).
\end{align}
\end{remark}

\section*{Appendix}
In the Appendix, we prove \eqref{e:sp14}.
\begin{proof}
For $\mu_1\geq\lambda_1$ we have that
\begin{align}
\langle \mu^{sp}|\lambda^{sp}\rangle=\delta_{\mu_1,\lambda_1}\langle 0|Y^*_{-\mu_l}\cdots Y^*_{-\mu_2}Y_{-\lambda_2}\cdots Y_{-\lambda_l}|0\rangle.
\end{align}
Now we consider the subcase when $\mu_1<\lambda_1$. It follows from \eqref{e:com1} that
$$
\langle \mu^{sp}|\lambda^{sp}\rangle=\begin{cases}
0\qquad \mu_1+i-1\neq \lambda_i ~\text{for all}~ 1\leq i\leq l\\
(-1)^{i-1}\langle 0|Y^*_{-\mu_l}\cdots Y^*_{-\mu_2}Y_{-\lambda_1-1}\cdots Y_{-\lambda_{i-1}-1}Y_{-\lambda_{i+1}} \cdots Y_{-\lambda_l}|0\rangle, ~~~~ \mu_1+i-1= \lambda_i.
\end{cases}
$$
Continuing the process, $\langle \mu^{sp}|\lambda^{sp}\rangle$ equals to 0 or $(-1)^\epsilon\langle 0|Y^*_{-\mu_l}Y_{-\lambda_1-l+1}|0\rangle$, while $\langle 0|Y^*_{-\mu_l}Y_{-\lambda_1-l+1}|0\rangle=-\langle 0|Y_{-\lambda_1-l}Y^*_{-\mu_l-1}|0\rangle=0$ since $-\mu_l-1<0$ and by \eqref{e:com1}. We therefore obtain \eqref{e:sp14} by combining the relations between $\mu_i$ and $\lambda_i$.
\end{proof}

\end{document}